\def\TITLE{A distributed control strategy for reactive power compensation in smart microgrids}
\def\TITLEnl{A distributed control strategy for reactive\\power compensation in smart microgrids}
\def\AUTHORS{Saverio Bolognani and Sandro Zampieri}
\newtheorem{theorem}{Theorem}
\newtheorem{corollary}[theorem]{Corollary}
\newtheorem{lemma}[theorem]{Lemma}
\newtheorem{proposition}[theorem]{Proposition}
\newtheorem*{remark}{Remark}
\newtheorem{definition}[theorem]{Definition}
\newtheorem{assumption}[theorem]{Assumption}
\newlength{\IEEECOLWIDTH}
\newcommand{\expect}[2][{}]{\mathbb E_{#1} \left[ #2 \right]}
\def\1{{\mathbf{1}}}
\renewcommand\Re{\operatorname{Re}}
\renewcommand\Im{\operatorname{Im}}
\DeclareMathOperator{\Imag}{span}
\DeclareMathOperator{\vect}{vec}
\DeclareMathOperator{\diag}{diag}
\DeclareMathOperator{\trace}{Tr}
\DeclareMathOperator*{\argmin}{arg\,min}
\def\realnumbers{\mathbb{R}}
\def\complexnumbers{\mathbb{C}}
\def \graph	{\mathcal{G}}		
\def \nodes	{\mathcal{V}}		
\def \edges	{\mathcal{E}}		
\def \compensators {{\mathcal{C}}}		
\def \lapl {{\boldsymbol L}}
\def \green {{\boldsymbol X}}
\def \realgreen {{X}}
\def \cplxZ {{\boldsymbol Z}}
\def \PCC {{0}}
\def \nonodes {{n}}
\def \noedges {{|\edges|}}
\def \nocompensators {{m}}
\def \noclusters {{\ell}}
\def \Fave {{F_\text{ave}}}
\def \Eave {{E_\text{ave}}}
\title{\LARGE \bf \TITLEnl}
\author{\AUTHORS
\thanks{The authors are with the Department of Information Engineering, University of Padova, Italy. %
				Mailing address: via Gradenigo 6/B, 35131 Padova, Italy. %
				Phone: +39 049 827 7757. Fax +39 049 827 7614. %
        Email: {\tt\small \{saverio.bolognani, zampi\}@dei.unipd.it}.}%
}
\begin{document}

\maketitle



\begin{abstract}
We consider the problem of optimal reactive power compensation for the minimization
of power distribution losses in a smart microgrid.
We first propose an approximate model for the power distribution network,
which allows us to cast the problem into the class of convex quadratic, linearly constrained,
optimization problems.
We then consider the specific problem of commanding the microgenerators connected to the microgrid, in order to achieve the optimal injection of reactive power.
For this task, we design a randomized, gossip-like optimization algorithm.
We show how a distributed approach is possible,
where microgenerators need to have only a partial knowledge of the problem parameters and of the state, 
and can perform only local measurements.
For the proposed algorithm, 
we provide conditions for convergence
together with an analytic characterization of the convergence speed. 
The analysis shows that, in radial networks, the best performance can be achieved when we command cooperation among units that are neighbors in the electric topology.
Numerical simulations are included to validate the proposed model and to confirm the analytic results about the
performance of the proposed algorithm.
\end{abstract}




\section{Introduction}

Most of the distributed optimization methods have been derived 
for the problem of dispatching part of a large scale optimization
algorithm to different processing units \cite{Bertsekas_1997_ParallelandDistributed}.
When the same methods are applied to \emph{networked control systems} (NCS) \cite{NCS:07}, however, different issues arise.
The way in which decision variables are assigned to different agents 
is not part of the designer degrees of freedom.
Moreover, each agent has a local and limited knowledge of the problem parameters and of the system state.
Finally, the information exchange between agents can occur not only via a given communication channel, but also via local actuation and subsequent measurement performed on an underlying physical system.
The extent of these issues depends on the particular application.
In this work we present a specific scenario,
belonging to the motivating framework of smart electrical power distribution networks
\cite{Santacana_2010_GettingSmart, Ipakchi_2009_Gridoffuture},
in which these features play a central role.

In the last decade,
power distribution networks have seen 
the introduction of distributed microgeneration of electric energy
(enabled by technological advances and motivated by economical and environmental reasons).
This fact, 
together with an increased demand and the need for higher quality of service, 
has been driving the integration of a large amount of information and communication technologies (ICT) into these networks.
Among the many different aspects of this transition, 
we focus on the control of the distributed energy resources (DERs)
inside a smart microgrid
\cite{Lopes_2006_Definingcontrolstrategies,Green_2007_Controlofinverterbased}.
A microgrid is a portion of the low-voltage power distribution network that is managed autonomously from the rest of the network, 
in order to achieve better quality of the service, 
improve efficiency, and pursue specific economic interests.
Together with the loads connected to the microgrid 
(both residential and industrial customers),
we also have microgeneration devices 
(solar panels, combined heat-and-power plants, micro wind turbines, etc.).
These devices are connected to the microgrid via electronic interfaces (inverters),
whose main task is to enable the injection of the produced power into the microgrid.
However, these devices can also perform different other tasks,
denoted as \emph{ancillary services}
\cite{Katiraei_2006_Powermanagementstrategies,Prodanovic_2007_HarmonicandReactive,Tedeschi_2008_SynergisticControland}:
reactive power compensation, harmonic compensation, voltage support.

In this work we consider the problem of \emph{optimal reactive power compensation}.
Loads belonging to the microgrid may require a sinusoidal current which is not in phase with voltage.
A convenient description for this, consists in saying that they demand reactive power together with active power,
associated with out-of-phase and in-phase components of the current, respectively.
Reactive power is not a ``real'' physical power, 
meaning that there is no energy conversion involved nor fuel costs to produce it.
Like active power flows, 
reactive power flows contribute to power losses on the lines, 
cause voltage drop, and may lead to grid instability.
It is therefore preferable to minimize reactive power flows 
by producing it as close as possible to the users that need it.
We explore the possibility of using the electronic interface 
of the microgeneration units to optimize the flows of reactive power in the microgrid.
Indeed, the inverters of these units are generally oversized, 
because most of the 
distributed energy sources are intermittent in time, 
and the electronic interface is designed according to the peak power production.
When they are not working at the rated power, 
these inverters can be commanded to inject a desired amount 
of reactive power at no cost 
\cite{Rabiee_2009_ReactivePowerPricing}.

This idea has been recently investigated in the literature on power systems \cite{Yoshida_2000_particleswarmoptimization,Zhao_2005_multiagent-basedparticleswarm,Lavaei_2011_Powerflowoptimization}. 
However, these works consider a centralized scenario in which the parameters of the entire power grid are known, the controller can access the entire system state, the microgenerators are in small number, and they receive reactive power commands from a central processing unit.
In \cite{Rogers2010}, a hierarchical and secure architecture has been proposed for supporting such dispatching.
The contribution of this work consists in casting the same problem in the framework of networked control systems and distributed optimization.
This approach allows the design of algorithms and solutions which can guarantee scalability, robustness to insertion and removal of the units, and compliance with the actual communication capabilities of the devices.
Up to now, the few attempts of applying these tools to the power distribution networks
have focused on grids comprising a large number of mechanical synchronous generators, 
instead of power inverters.
See for example the stability analysis for these systems in \cite{Dorfler_2010_Synchronizationandtransient}
and the decentralized control synthesis in \cite{Guo_2000_Nonlineardecentralizedcontrol}.
Seminal attempts to distribute optimal reactive power compensation algorithms in a large-scale power network, consisted in dividing the grid into separate regions, each one provided with a supervisor \cite{Deeb1991,Kim1997}. Each of the supervisors can access all the regional measuments and data (including load demands) and can solve the corresponding small-scale optimization problem, enforcing consistency of the solution with the neighbor regions.  
Instead, preliminary attempts of designing distributed reactive power compensation strategies for microgrids populated by power inverters have been performed only very recently. However, the available results in this sense are mainly supported by heuristic approaches \cite{Tenti_submitted_Distributionlossminimization}, 
they follow suboptimal criteria for sharing a cumulative reactive power command among microgenerators \cite{Robbins2011},
and in some cases do not implement any communication or synergistic behavior of the microgenerators \cite{Turitsyn_2011_Optionscontrolreactive}.
In \cite{Baran2007}, a multi-agent-system approach has been employed for commanding reactive power injection in order to support the microgrid voltage profile.

The contribution of this paper is twofold. On one side, we propose a rigorous analytic derivation of an approximate model of the power flows. 
The proposed model can be considered as a generalization of the \emph{DC power flow model} commonly adopted in the power system literature (see for example \cite[Chapter 3]{Gomez_2009_Electricenergysystems} and references therein).
Via this model, the optimal reactive power flow (ORPF) problem is casted into a quadratic optimization (Section \ref{sec:problem_formulation}).
The second contribution is to propose and to analyze a 
distributed strategy for commanding the reactive power injection of each
microgeneration unit, capable of 
optimizing reactive power flows across the microgrid 
(Section \ref{sec:randomizedAlgorithm}). 
In Section \ref{sec:convergence} we characterize convergence of the proposed algorithm to the global optimal solution, and we study its performance.
In Section \ref{sec:optimalHypergraph} we show how the performance of this algorithm can be optimized by a proper choice of the communication strategy.
In Section \ref{sec:simulations}, we finally validate both the proposed model 
and the proposed algorithm via simulations.

\section{Mathematical preliminaries and notation}

Let $\graph= (\nodes, \edges, \sigma,\tau)$ be a directed graph, 
where $\nodes$ is the set of nodes,
$\edges$ is the set of edges, 
and $\sigma, \tau: \edges \rightarrow \nodes$ are two functions such that
edge $e \in \edges$ goes from the source node $\sigma(e)$ to the terminal node $\tau(e)$.
Two edges $e$ and $e'$ are \emph{consecutive} if $\{\sigma(e), \tau(e)\} \cap \{\sigma(e'), \tau(e')\}$ is not empty.
A \emph{path} is a sequence of consecutive edges.

In the rest of the paper we will often introduce complex-valued functions defined on the nodes and on the edges.
These functions will also be intended as vectors in $\complexnumbers^\nonodes$ (where $\nonodes = |\nodes|$) and $\complexnumbers^\noedges$.
Given a vector $u$, we denote by $\bar{u}$ its (element-wise) complex conjugate, and by $u^T$ its transpose. 

Let moreover $A \in \{0, \pm 1\}^{\noedges \times \nonodes}$ 
be the incidence matrix of the graph $\graph$, defined via its elements
$$
[A]_{ev} = \left\{\begin{array}{cl}
-1 & \text{if }v=\sigma(e) \\
1 & \text{if }v=\tau(e) \\
0 & \text{otherwise.}
\end{array}\right.
$$

If $\mathcal W$ is a subset of nodes, we define by $\1_{\mathcal W}$ the column vector whose elements are 
$$
[\1_{\mathcal W}]_{v}
\begin{cases}
1 & \text{if } v \in \mathcal W \\
0 & \text{otherwise}.
\end{cases}
$$
Similarly, if $w$ is a node, we denote by $\1_w$ the column vector 
whose value is $1$ in position $w$, and $0$ elsewhere,
and we denote by $\1$ the column vector of all ones.
If the graph $\graph$ is connected 
(i.e. for every pair of nodes there is a path connecting them),
then $\1$ is the only vector in the null space $\ker A$.

An undirected graph $\graph$ is a graph in which for every edge $e \in \edges$, 
there exists an edge $e' \in \edges$ such that $\sigma(e') = \tau(e)$ and $\tau(e') = \sigma(e)$.
In case the graph has no multiple edges and no self loops, 
we can also describe the edges of an undirected graph as subsets $\{h',h''\} \subseteq \nodes$ of cardinality 2.
Similarly, we define a hypergraph $\mathcal H$ 
as a pair $(\nodes, \edges)$ in which each edge is a 
subset $\{h', h'', \ldots \}$ of $\nodes$, of arbitrary cardinality
\cite{voloshin2009introduction}.


\section{Problem formulation}
\label{sec:problem_formulation}


\subsection{A model of a microgrid}

\begin{figure}[bt]
\centering
\resizebox{0.9\IEEECOLWIDTH}{!}{
\includegraphics[width=\IEEECOLWIDTH]{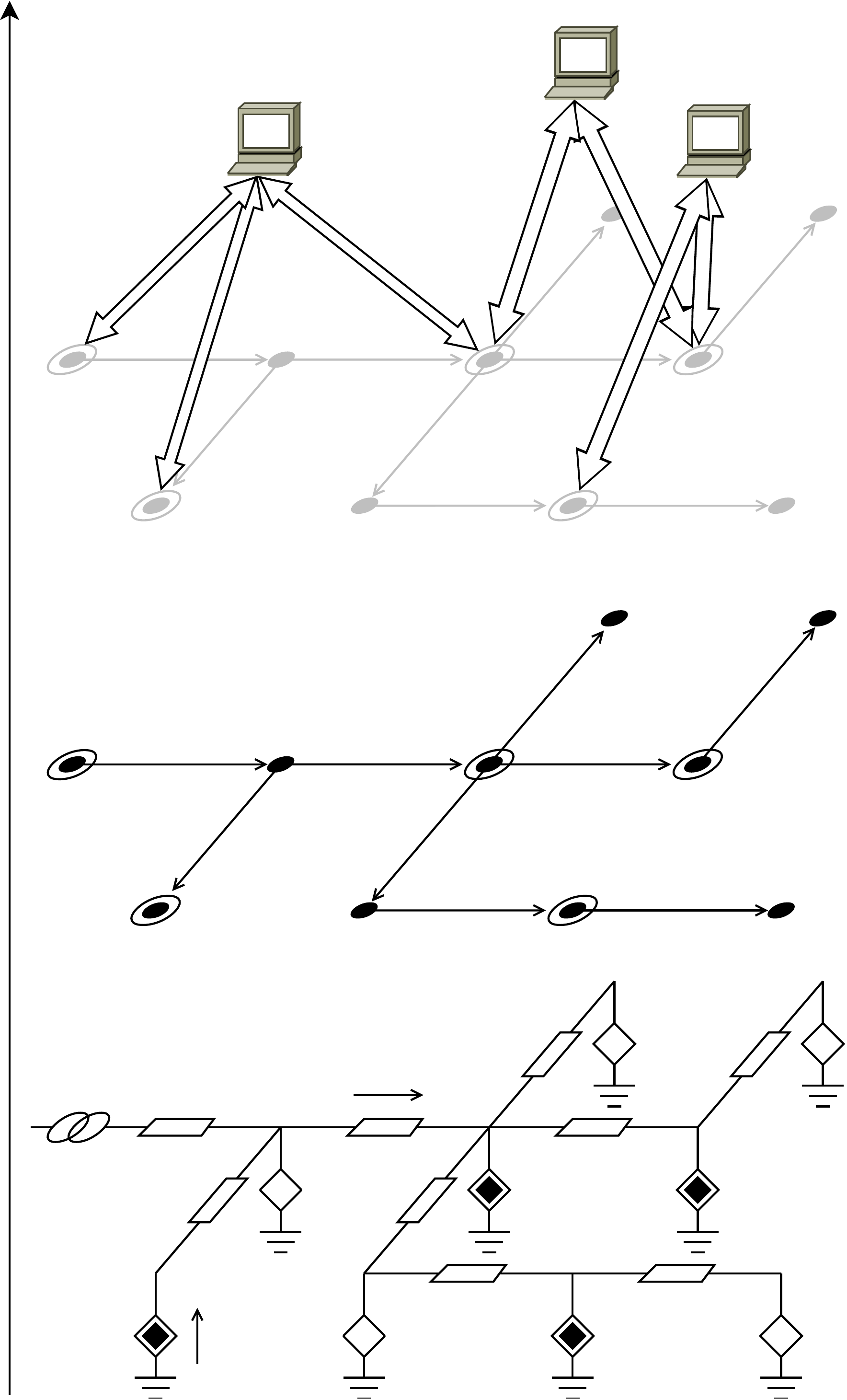}
\put(-250,25){\rotatebox{90}{\small Electric network}}
\put(-250,150){\rotatebox{90}{\small Graph model}}
\put(-250,295){\rotatebox{90}{\small Control}}
\put(-213,35){$u_v$}
\put(-181,14){$i_v$}
\put(-136,94){$\xi_e$}
\put(-138,65){$z_e$}
\put(-214,138){$v$}
\put(-170,190){$\sigma(e)$}
\put(-118,190){$\tau(e)$}
\put(-140,170){$e$}
\put(-224,190){$0$}
\put(-168,376){$\compensators_i$}
\put(-77,399){$\compensators_j$}
\put(-41,375){$\compensators_k$}
}
\caption{Schematic representation of the microgrid model.
In the lower panel a circuit representation is given, where black diamonds are microgenerators, white diamonds are loads, and the left-most element of the circuit represents the PCC.
The middle panel illustrates the adopted graph representation for the same microgrid. Circled nodes represent compensators (i.e. microgenerators and the PCC).
The upper panel shows how the compensators can be divided into overlapping clusters in order to implement the control algorithm proposed in Section \ref{sec:randomizedAlgorithm}. Each cluster is provided with a supervisor with some computational capability.
}
\label{fig:microgrid_model}
\end{figure}

We define a smart microgrid as a portion of the power distribution network that is connected to the power transmission network in one point, and hosts a number of loads and micro power generators, as described for example in \cite{Lopes_2006_Definingcontrolstrategies,Green_2007_Controlofinverterbased}
(see Figure~\ref{fig:microgrid_model}, lower panel).
For the purpose of this paper, we model a microgrid as a directed graph $\graph$, 
in which edges represent the power lines,
and nodes represent both loads and generators that are connected to the microgrid (see Figure~\ref{fig:microgrid_model}, middle panel).
These include loads, microgenerators,
and also the point of connection of the microgrid to the transmission grid
(called point of common coupling, or PCC).

We limit our study to the steady state behavior of the system,
when all voltages and currents are sinusoidal signals at the same frequency.
Each signal can therefore be represented via a complex number $y = |y|e^{j\angle y}$ whose absolute value $|y|$ corresponds to the signal
root-mean-square value, and whose phase $\angle y$ corresponds to the phase of the signal with respect to an arbitrary global reference.

In this notation, the steady state of a microgrid is described by the following system variables (see Figure~\ref{fig:microgrid_model}, lower panel):
\begin{itemize}
\item $u \in \complexnumbers^\nonodes$, where $u_v$ is the grid voltage at node $v$;
\item $i \in \complexnumbers^\nonodes$, where $i_v$ is the current injected by node $v$;
\item $\xi \in \complexnumbers^\noedges$, where $\xi_e$ is the current flowing on the edge $e$.
\end{itemize}

The following constraints are satisfied by $u,i$ and $\xi$
\begin{align}
A^T \xi + i & = 0, \label{eq:KCL} \\
Au + \cplxZ \xi &= 0, \label{eq:KVL}
\end{align}
where $A$ is the incidence matrix of $\graph$, and $\cplxZ = \diag(z_e, e\in \edges)$ is the diagonal matrix of line impedances, $z_e$ being the impedance of the microgrid power line corresponding to the edge $e$. 
Equation \eqref{eq:KCL} corresponds to Kirchhoff's current law (KCL) at the nodes, 
while \eqref{eq:KVL} describes the voltage drop on the edges of the graph.

Each node $v$ of the microgrid is then characterized by a law 
relating its injected current $i_v$ with its voltage $u_v$.
We model the PCC (which we assume to be node $\PCC$)
as an ideal sinusoidal voltage generator at the microgrid nominal voltage $U_N$ 
with arbitrary, but fixed, angle $\phi$
\begin{equation}
u_\PCC = U_N e^{j \phi}.
\label{eq:PCCmodel}
\end{equation}

We model loads and microgenerators (that is, every node $v$ of the microgrid except the PCC) via the following law relating the voltage $u_v$ and the current $i_v$
\begin{equation}
u_v\bar i_v = s_v\left|\frac{u_v}{U_N}\right|^{\eta_v},\quad \forall v \in \nodes \backslash \{ \PCC \},
\label{eq:ZIPModel}
\end{equation}
where $s_v$ is the \emph{nominal complex power} and $\eta_v$ is a characteristic parameter of the node $v$.
The model \eqref{eq:ZIPModel} is called \emph{exponential model} \cite{IEEE_1993_Loadrepresentationdynamic}
and is widely adopted in the literature on power flow analysis \cite{Haque_1996_Loadflowsolution}.
Notice that $s_v$ is the complex power that the node would inject into the grid,
if the voltage at its point of connection were the nominal voltage $U_N$.
The quantities
$$
p_v := \Re(s_v) \quad \text{and} \quad q_v := \Im(s_v) 
$$
are denoted as \emph{active} and \emph{reactive} power, respectively.
The nominal complex powers $s_v$ corresponding to microgrid loads are such that $\{p_v <0\}$, meaning that positive active power is \emph{supplied} to the devices. The nominal complex powers corresponding to microgenerators, on the other hand, are such that $\{p_v \ge 0\}$, as positive active power is \emph{injected} into the grid.
The parameter $\eta_v$ depends on the particular device. For example, constant power, constant current, and constant impedance devices are described by $\eta_v = 0,1,2$, respectively.
In this sense, this model is also a generalization of ZIP models \cite{IEEE_1993_Loadrepresentationdynamic},
which also are very common in the power system literature.
Microgenerators fit in this model with $\eta_v = 0$, as they generally are commanded via a complex power reference and they can inject it independently from the voltage at their point of connection
\cite{Lopes_2006_Definingcontrolstrategies,Green_2007_Controlofinverterbased}.

The task of solving the system of nonlinear equations given by 
\eqref{eq:KCL}, \eqref{eq:KVL}, \eqref{eq:PCCmodel}, and \eqref{eq:ZIPModel} to obtain the grid voltages
and currents, given the network parameters and
the injected nominal powers $\{s_v, v \in \nodes \backslash \{\PCC\}\}$ at every node, 
has been extensively covered in the literature under the denomination of \emph{power flow analysis}
(see for example \cite[Chapter 3]{Gomez_2009_Electricenergysystems}).
In the following, we derive an approximate model for the microgrid state, which will be used later for the
setup of the optimization problem and for the derivation of the proposed distributed algorithm.
To do so, a couple of technical lemmas are needed.

\begin{lemma}
Let $\lapl$ be the complex valued Laplacian $\lapl := A^T \cplxZ^{-1} A$. 
There exists a unique symmetric matrix $\green \in \complexnumbers^{\nonodes \times \nonodes }$ such that
\begin{equation}
\begin{cases}
\green \lapl = I - \1 \1_0^T \\
\green \1_0 = 0.
\end{cases}
\label{eq:Green_matrix_properties}
\end{equation}
\label{lemma:X}
\end{lemma}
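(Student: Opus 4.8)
The plan is to reduce the statement to the invertibility of the ``grounded'' Laplacian obtained by deleting the PCC node, construct $\green$ explicitly from its inverse, and then settle uniqueness by a kernel argument. First I would record two elementary facts about $\lapl=A^T\cplxZ^{-1}A$: since $\cplxZ^{-1}$ is diagonal, $\lapl$ is symmetric (though not Hermitian); and since $\graph$ is connected, $\ker A=\Imag\{\1\}$, so $\lapl\1=A^T\cplxZ^{-1}A\1=0$, i.e.\ every row of $\lapl$ sums to zero. Writing the nodes as $\{\PCC\}\cup\{\until{\nonodes-1}\}$ (recall $\PCC=0$), I partition $\lapl$ into the scalar block $\lapl_{00}$, the off-diagonal blocks $\lapl_{0\bullet},\lapl_{\bullet0}$, and the principal submatrix $\tilde\lapl$ obtained by removing the row and column of $\PCC$. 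The vanishing row sums then give the key identity $\lapl_{\bullet0}=-\tilde\lapl\,\1$, where $\1$ here is the all-ones vector of length $\nonodes-1$.

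Assuming for the moment that $\tilde\lapl$ is invertible, I would \emph{define} $\green$ to be the matrix whose $\PCC$-row and $\PCC$-column are zero and whose remaining block equals $\tilde\lapl^{-1}$. Then $\green\1_0=0$ holds by construction, and $\green$ is symmetric because $\tilde\lapl$, being a principal submatrix of the symmetric $\lapl$, has a symmetric inverse. A single block multiplication, using $\lapl_{\bullet0}=-\tilde\lapl\,\1$, yields $\green\lapl=I-\1\1_0^T$: the $\PCC$-row comes out zero, the lower-right block is $\tilde\lapl^{-1}\tilde\lapl=I$, and the lower-left block is $\tilde\lapl^{-1}\lapl_{\bullet0}=-\1$, which are precisely the blocks of $I-\1\1_0^T$. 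For uniqueness, if $\green'$ also satisfies both equations then $D:=\green-\green'$ satisfies $D\lapl=0$ and $D\1_0=0$; since $\lapl$ is symmetric, $D\lapl=0$ forces every row of $D$ into $\ker\lapl$, and if $\ker\lapl=\Imag\{\1\}$ this means $D=c\,\1^T$ for some column $c$, whence $0=D\1_0=c$ and $D=0$.

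Everything thus hinges on the single nontrivial point that $\tilde\lapl$ is invertible, equivalently $\ker\lapl=\Imag\{\1\}$, and this is the step I expect to be the main obstacle. Over $\complexnumbers$ the usual real argument breaks down: $\cplxZ^{-1}$ is not Hermitian, so $x^*\lapl x$ is not a definite form and one cannot deduce $Ax=0$ from $\lapl x=0$. My plan is to invoke the weighted matrix-tree theorem, which expresses the relevant cofactor as $\det\tilde\lapl=\sum_{T}\prod_{e\in T}z_e^{-1}$, the sum ranging over the spanning trees $T$ of $\graph$. Connectedness guarantees at least one spanning tree, so the sum is nonempty; the remaining difficulty is to rule out cancellation among the complex summands. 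This is where a physical hypothesis on the line impedances must enter: if all the $z_e$ share a common argument (a constant reactance-to-resistance ratio, the regime underlying the DC-power-flow approximation), then every tree-product, being a product of exactly $\nonodes-1$ admittances, carries the same argument, so the terms cannot cancel and $\det\tilde\lapl\neq0$. With invertibility of $\tilde\lapl$ established, the explicit construction and the uniqueness argument above complete the proof.
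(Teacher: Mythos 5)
Your construction is correct and genuinely different from the paper's. The paper argues existence abstractly: it asserts $\ker\lapl=\Imag\{\1\}=\ker(I-\1\1_0^T)$, deduces from this the solvability of $\green'\lapl=I-\1\1_0^T$, corrects $\green'$ to $\green=\green'(I-\1_0\1^T)$ to enforce $\green\1_0=0$, and then gets uniqueness and symmetry in one stroke by exhibiting $\left[\begin{smallmatrix}\green & \1\\ \1^T & 0\end{smallmatrix}\right]$ as the inverse of the bordered matrix $\left[\begin{smallmatrix}\lapl & \1_0\\ \1_0^T & 0\end{smallmatrix}\right]$. You instead build $\green$ explicitly as the grounded-Laplacian inverse $\tilde\lapl^{-1}$ padded with a zero row and column at the PCC, which makes the verification of \eqref{eq:Green_matrix_properties} a mechanical block computation and yields symmetry for free; your kernel argument for uniqueness is also sound. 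What the paper's route buys is the closed-form bordered-inverse identity (useful elsewhere); what yours buys is an explicit formula for $\green$ and, importantly, an honest confrontation with the one nontrivial fact that both proofs rest on, namely $\ker\lapl=\Imag\{\1\}$ (equivalently, invertibility of $\tilde\lapl$), which the paper simply asserts.

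On that key step, however, your resolution is weaker than necessary, and your reason for abandoning the quadratic-form argument is not right. Since $A$ is a \emph{real} matrix, $x^*A^T=\overline{(Ax)}^T$, so for any $x$ one has $x^*\lapl x=\sum_{e\in\edges}z_e^{-1}\,|(Ax)_e|^2$; if $\lapl x=0$ then taking real parts gives $\sum_e \Re(z_e^{-1})\,|(Ax)_e|^2=0$, and since physical line impedances satisfy $\Re(z_e)>0$ (hence $\Re(z_e^{-1})>0$), this forces $Ax=0$ and thus $x\in\Imag\{\1\}$ by connectedness. So the ``usual real argument'' survives intact under the mild, physically automatic hypothesis of positive line resistances. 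By contrast, your matrix-tree route needs the much stronger constant-argument hypothesis (the paper's Assumption~\ref{ass:theta}, which is not in force at this point of the paper) to exclude cancellation, because the half-plane condition alone does not control the arguments of products of $\nonodes-1$ admittances. Note that \emph{some} hypothesis is genuinely needed --- for arbitrary nonzero complex weights the claim fails (a triangle with admittances $1,\omega,\omega^2$, $\omega=e^{2\pi j/3}$, has tree-sum $1+\omega+\omega^2=0$) --- so you were right to flag this as the crux; just replace the matrix-tree argument with the real-part computation above and your proof goes through under the natural assumption $\Re(z_e)>0$.
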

\begin{proof}
Let us first prove the existence of $\green$.
As $\ker \lapl = \Imag \1 = \ker (I - \1 \1_0^T)$,
there exists $\green'\in\complexnumbers^{\nonodes \times \nonodes }$ such that $\green' \lapl =(I - \1 \1_0^T)$.
Let $\green = \green' (I - \1_0 \1^T)$. Then
\begin{align*}
\green \lapl &= \green' (I - \1_0 \1^T)\lapl = \green' \lapl = I - \1 \1_0^T, \\
\green \1_0  &= \green' (I - \1_0 \1^T) \1_0 = 0.
\end{align*}
Existence is then guaranteed.
To prove uniqueness, notice that
$$
\begin{bmatrix}\green & \1 \\ \1^T & 0 \end{bmatrix}
\begin{bmatrix}\lapl & \1_0 \\ \1_0^T & 0 \end{bmatrix}
=
\begin{bmatrix}\green \lapl + \1 \1_0^T & \green \1_0 \\ \1^T\lapl & \1^T \1_0 \end{bmatrix}
=
\begin{bmatrix}I & 0 \\ 0 & 1 \end{bmatrix}.
$$
Therefore
\begin{equation*}
\begin{bmatrix}\green & \1 \\ \1^T & 0 \end{bmatrix}
=
\begin{bmatrix}\lapl & \1_0 \\ \1_0^T & 0 \end{bmatrix}^{-1},
\label{eq:matricial_definition_of_green_matrix}
\end{equation*}
and uniqueness of $\green$ follows from the uniqueness of the inverse.
Moreover, as $\lapl = \lapl^T$, we have
$$
\begin{bmatrix}\green^T & \1 \\ \1^T & 0 \end{bmatrix}
=
\begin{bmatrix}\lapl & \1_0 \\ \1_0^T & 0 \end{bmatrix}^{-T}
=
\begin{bmatrix}\lapl^T & \1_0 \\ \1_0^T & 0 \end{bmatrix}^{-1}
= 
\begin{bmatrix}\green & \1 \\ \1^T & 0 \end{bmatrix}
$$
and therefore $\green = \green^T$.
\end{proof}

The matrix $\green$ depends only on the topology of the microgrid power lines and on their impedance
(compare it with the definition of Green matrix in \cite{Ghosh_2008_Minimizingeffectiveresistance}).
Indeed, 
it can be shown that, for every pair of nodes $(v,w)$,
\begin{equation}
(\1_v - \1_w)^T \green (\1_v - \1_w) = Z^\text{eff}_{vw},
\label{eq:effectiveResistance}
\end{equation}
where $Z^\text{eff}_{vw}$ represents the \emph{effective impedance} of the power lines between node $v$ and $w$.

All the currents $i$ and the voltages $u$ of the microgrid are therefore determined by the equations
\begin{equation}
\begin{cases}
u=\green i+ U_N e^{j\phi} \1 \\ 
\1^T i = 0\\
\displaystyle u_v\bar i_v = s_v\left|\frac{u_v}{U_N}\right|^{\eta_v},\quad \forall v \in \nodes \backslash \{ \PCC \},
\end{cases}
\label{eq:det_iu_U0}
\end{equation}
where the first equation results from \eqref{eq:KCL}, \eqref{eq:KVL}, and \eqref{eq:PCCmodel}
together with Lemma \ref{lemma:X},
while the second equation descends from \eqref{eq:KCL}, using the fact that $A\1 =0$ in a connected graph.

We can see the currents $i$ and the voltages $u$ as functions $i(U_N),u(U_N)$ of $U_N$.
The following proposition provides the Taylor approximation of $i(U_N)$ and $u(U_N)$ for large $U_N$.

\begin{proposition}
Let $s$ be the vector of all nominal complex powers $s_v$, 
including 
\begin{equation}
s_0:=-\sum_{v\in\nodes\backslash\{\PCC\}} s_v.
\label{eq:def_s0}
\end{equation}
Then for all $v \in \nodes$ we have that
\begin{equation}
\begin{split}
i_v(U_N) &= e^{j\phi} \left( \frac{\bar s_v}{U_N} +  \frac{c_v(U_N)}{U_N^2} \right) \\
u_v(U_N) &= e^{j\phi} \left( U_N + \frac{[\green \bar s]_v}{U_N} 
+ \frac{d_v(U_N)}{U_N^2}\right) 
\end{split}
\label{eq:approximate_solution}
\end{equation}
for some complex valued functions $c_v(U_N)$ and $d_v(U_N)$ which are $O(1)$ as $U_N\to\infty$, i.e. they are bounded functions for large values of the 
nominal voltage $U_N$.
\label{proposition:taylor_expansion_complex}
\end{proposition}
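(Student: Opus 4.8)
The plan is to treat $\varepsilon := 1/U_N$ as a small parameter and to show, via the implicit function theorem, that the solution of \eqref{eq:det_iu_U0} depends smoothly on $\varepsilon$ near $\varepsilon = 0$; the asserted expansions \eqref{eq:approximate_solution} are then the first-order Taylor expansions in $\varepsilon$, with $c_v$ and $d_v$ collecting the (bounded) remainders. First I would strip off the dependence on $U_N$ and $\phi$ by rescaling. Setting $w_v := e^{-j\phi} u_v / U_N$ and $\iota_v := e^{-j\phi} U_N\, i_v$, the system \eqref{eq:det_iu_U0} becomes
\begin{equation*}
\begin{cases}
w = \1 + \varepsilon^2 \green \iota \\
\1^T \iota = 0 \\
w_v \bar\iota_v = s_v |w_v|^{\eta_v}, \quad \forall v \in \nodes\backslash\{\PCC\},
\end{cases}
\end{equation*}
where both $e^{j\phi}$ and the absolute scale $U_N$ have disappeared (here I use $\green\1_0 = 0$ from Lemma~\ref{lemma:X}, which forces $[\green i]_\PCC = 0$ and hence $w_\PCC = 1$). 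At $\varepsilon = 0$ this reduces to $w = \1$ together with $\bar\iota_v = s_v$ for $v\neq\PCC$ and $\1^T\iota = 0$; using the definition \eqref{eq:def_s0} of $s_0$ for the $\PCC$ component, its unique solution is $w = \1$, $\iota = \bar s$.

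Next I would define the map $F(\iota,\varepsilon)$ whose $v$-th component is $w_v\bar\iota_v - s_v|w_v|^{\eta_v}$ for $v\neq\PCC$ (with $w_v = 1 + \varepsilon^2[\green\iota]_v$) and $\1^T\iota$ for $v=\PCC$, so that solving the rescaled equations amounts to $F(\iota,\varepsilon)=0$, with $F(\bar s,0)=0$. Since near $(\bar s,0)$ every $w_v$ stays close to $1$, the factor $|w_v|^{\eta_v}$ is a smooth (indeed real-analytic) function of the real and imaginary parts of $\iota$ and of $\varepsilon$, so $F$ is smooth there. The crux is to verify that the Jacobian of $F$ with respect to $\iota$—viewed as a real map on $\complexnumbers^\nonodes \cong \realnumbers^{2\nonodes}$—is invertible at $(\bar s,0)$. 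At $\varepsilon=0$ one has $w_v \equiv 1$ independently of $\iota$, so the $v$-th equation ($v\neq\PCC$) reduces to $\bar\iota_v - s_v$, whose $\iota$-derivative is the conjugation map $\iota_v\mapsto\bar\iota_v$ acting only in its own block (real matrix $\operatorname{diag}(1,-1)$, invertible), while the $\PCC$-th equation contributes the identity in every block. Ordering the variables with $\iota_\PCC$ last, the Jacobian is block lower-triangular with invertible diagonal blocks, hence invertible.

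The implicit function theorem then yields a unique smooth branch $\iota(\varepsilon)$ with $\iota(0)=\bar s$, which simultaneously establishes existence of the solution for large $U_N$ and its smooth dependence on $\varepsilon$. Writing $\iota(\varepsilon) = \bar s + O(\varepsilon)$ and undoing the rescaling gives $i_v = e^{j\phi}\varepsilon\,\iota_v(\varepsilon)$ and $u_v = e^{j\phi}U_N w_v = e^{j\phi}\bigl(U_N + \varepsilon[\green\iota(\varepsilon)]_v\bigr)$; reading off
$$
c_v(U_N) = U_N\bigl(\iota_v(\varepsilon) - \bar s_v\bigr), \qquad
d_v(U_N) = U_N\bigl[\green(\iota(\varepsilon) - \bar s)\bigr]_v
$$
reproduces exactly \eqref{eq:approximate_solution}, and both are bounded as $U_N\to\infty$ because $\iota(\varepsilon)-\bar s = O(\varepsilon)$. (In fact, since $w_v$ depends on $\varepsilon$ only through $\varepsilon^2$, differentiating $F=0$ at $\varepsilon=0$ shows $\iota'(0)=0$, so $c_v$ and $d_v$ even vanish as $U_N\to\infty$.) The main obstacle is precisely the non-holomorphicity introduced by $\bar\iota_v$ and $|w_v|^{\eta_v}$: it rules out the complex-analytic implicit function theorem and forces the argument into real coordinates, so that the invertibility of the real Jacobian is the one step genuinely requiring care.
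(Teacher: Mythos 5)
Your argument is correct and follows essentially the same route as the paper's Appendix~A: introduce $\varepsilon = 1/U_N$, rescale away $\phi$ and the voltage scale, and apply the implicit function theorem in real coordinates at $\varepsilon = 0$, where the non-holomorphic terms ($\bar\iota_v$, $|w_v|^{\eta_v}$) force exactly the real-Jacobian invertibility check you carry out. The only difference is cosmetic: the paper keeps both the current and voltage residuals ($\delta$, $\lambda$) as implicit unknowns and inverts a larger block matrix, whereas you eliminate the voltage variables by substitution, which shrinks the Jacobian to a block-triangular form and additionally lets you note that $\iota'(0)=0$.
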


The proof of this proposition is based on elementary multivariable analysis, but requires a quite involved notation. For this reason it is given in Appendix~\ref{app:taylor}.
The quality of the  approximation proposed in the previous proposition relies on having large nominal voltage $U_N$
and relatively small currents injected by the inverters (or supplied to the loads).
This assumption is verified in practice and corresponds to correct design and operation of power distribution networks,
where indeed the nominal voltage is chosen sufficiently large 
(subject to other functional constraints) 
in order to deliver electric power to the loads with relatively small power losses on the power lines.
Numerical simulation in Section \ref{sec:simulations} will indeed show that 
the approximation is extremely good when power distribution networks are operated in their regular regime.

\begin{remark}
Notice that this approximation, in similar fashions, has been used in the literature before 
for the problem of estimating power flows on the power lines
(see among the others \cite{Baran_1989_Optimalsizingcapacitors,Turitsyn_2011_Optionscontrolreactive} 
and references therein).
It also shares some similarities with the DC power flow model \cite[Chapter 3]{Gomez_2009_Electricenergysystems}, extending it to the lossy case (in which lines are not purely inductive).
The analytical rigorous justification proposed here allows us to estimate the approximation error, 
and, more importantly, will provide the tools to understand what information on the system state 
can be gathered by properly sensing the microgrid.
\end{remark}

\subsection{Power losses minimization problem}

Similarly to what has been done for example in 
\cite{Cagnano_toappear_On-lineoptimalreactive},
we choose active power losses on the power lines as a metric for optimality of reactive power flows.
The total active power losses on the edges are given by
\begin{equation}
J^\text{tot} := \sum_{e\in \edges} |\xi_e|^2 \Re(z_e)
= \bar i^T \Re(\green) i,
\label{eq:Jtot}
\end{equation}
where we used \eqref{eq:KVL} and \eqref{eq:det_iu_U0}, together with the properties \eqref{eq:Green_matrix_properties} of $\green$, to express $\xi$ as a function of $i$.

In the scenario that we are considering, 
we are allowed to command only a subset $\compensators  \subset \nodes$
of the nodes of the microgrid (namely the microgenerators, also called \emph{compensators} in this framework).
We denote by $\nocompensators:=|\compensators|$ its cardinality.
Moreover, we assume that for these compensators we are only allowed to 
command the amount of reactive power $q_v = \Im(s_v)$ injected into the grid, as the decision on the amount of active power
follows imperative economic criteria. For example, in the case of renewable energy sources,
any available active power is typically injected into the grid to replace generation from traditional plants,
which are more expensive and exhibit a worse environmental impact 
\cite{Rabiee_2009_ReactivePowerPricing}.

The resulting optimization problem is then
\begin{equation}
\min_{q_v, \, v\in \compensators}\;
J^\text{tot},
\label{eq:nl_optimization_problem}
\end{equation}
where the vector of currents $i$ is a function of the decision variables $q_v$, $v \in \compensators$, via the implicit system of nonlinear equations \eqref{eq:det_iu_U0}.

In the typical formulations of the optimal reactive power compensation problem, some constraints might be present.
Traditional operational requirements constrain the voltage amplitude at every node to stay inside a given range centered around the microgrid nominal voltage
$$
|u_v| \in \left[ U_N - \Delta U, \; U_N + \Delta U \right], \quad v \in \nodes.
$$
Moreover, the power inverters that equip each microgenerator,
due to thermal limits, can only provide a limited amount of reactive power.
This limit depends on the size of the inverter but also on the 
concurrent production of active power by the same device 
(see \cite{Turitsyn_2011_Optionscontrolreactive}).
These constraints can be quite tight, and correspond to a set of \emph{box constraint} on the decision variables,
$$
q_v \in \left[ q_v^\text{min}, \; q_v^\text{max} \right], \quad v \in \compensators \backslash \{\PCC\}.
$$
These constraints have been relaxed
for the analysis presented in this paper, 
and for the subsequent design of a control strategy.
This choice allowed to conduct an analytic study of the performance of the proposed solution.
One possible extension of the approach presented here, dealing effectively with such limits, has been proposed in \cite{Bolognani2012}.

In the following, we show how the approximated model proposed in the previous section can be used to tackle the optimization problem \eqref{eq:nl_optimization_problem} and to design an algorithm for its solution.
By plugging the approximate system state \eqref{eq:approximate_solution}
into \eqref{eq:Jtot},
we have
\begin{align*}
J^\text{tot} &= \frac{1}{U_N^2} \bar s^T \Re(\green) s + \frac{1}{U_N^3}\tilde J(U_N,s) \\
&= \frac{1}{U_N^2} p^T \Re(\green) p + \frac{1}{U_N^2} q^T \Re(\green) q + \frac{1}{U_N^3}\tilde J(U_N,s)
\end{align*}
where $p = \Re(s)$, $q = \Im(s)$, and 
\begin{equation*}
\begin{split}
\tilde J(U_N,s) &:= 2\Re\left[s^T \Re(\green) c(U_N)\right]\\
&\phantom{:=} +\frac{1}{U_N}\bar c^T(U_N) \Re(\green) c(U_N)
\end{split}
\end{equation*} 
is bounded as $U_N$ tends to infinity. 
The term $\frac{1}{U_N^3}\tilde J(U_N,s)$ 
can thus be neglected if $U_N$ is large.

Therefore, via the approximate model \eqref{eq:approximate_solution},
we have been able to 
\begin{itemize}
\item approximate power losses as a quadratic function of the injected power;
\item decouple the problem of optimal power flows into the problem of optimal active and reactive power injection.
\end{itemize}

The problem of optimal reactive power injection at the compensators can therefore be
expressed as a quadratic, linearly constrained problem, in the form
\begin{equation}
\begin{split}
\min_{q_v, v\in \compensators} & \quad J(q), \qquad \text{where}\quad J(q) = \frac12 q^T \Re(\green) q,\\
\text{subject to}
& \quad \1^T q = 0 
\end{split}
\label{eq:Qoptimizationproblem}
\end{equation}
where the other components of $q$, namely $\{q_v, v \in \nodes \backslash \compensators\}$, are the nominal amounts of reactive power injected by the nodes that cannot be commanded,
and the constraint $\1^T q = 0$ directly descends from \eqref{eq:def_s0}.

The solution of the optimization problem \eqref{eq:Qoptimizationproblem} 
would not pose any challenge
if a centralized solver knew the problem parameters $\green$ and $\{q_v, v \in \nodes \backslash \compensators \}$. 
These quantities depend both on the grid parameters and on the reactive power demand of all the microgrid loads.
In the case of a large scale system, collecting all this information at a central location 
would be unpractical for many reasons, including communication constraints, delays, reduced robustness, and privacy concerns.

\section{A distributed algorithm for reactive power dispatching}
\label{sec:randomizedAlgorithm}

In this section, we present an algorithm that
allows the compensators to decide on the amount of reactive power 
that each of them have to inject in the microgrid in order to minimize the power distribution losses.
The proposed approach is based on a distributed strategy,
meaning that it can be implemented by the compensators
without the supervision of any central controller.
In order to design such a strategy, the optimization problem is decomposed into smaller, tractable subproblems that are assigned to small groups of compensators
(possibly even pairs of them).
We will show that the compensators can solve their corresponding subproblems via local measurements, a local knowledge of the grid topology, and a limited data processing and communication. We will finally show that the repeated solution of these subproblems yields the solution of the original global optimization problem.

It is worth remarking that the decomposition methods proposed in most of the literature on distributed optimization, for example in \cite{Nedic_2009_Distributedsubgradientmethods}, cannot be applied to this problem because the cost function in \eqref{eq:Qoptimizationproblem} is not separable into a sum of individual terms for each agent.
Approaching the decomposition of this optimization problem via its dual formulation (as proposed in many works, including the recent \cite{Boyd_2011_DistributedoptimizationADMM}) is also unlikely to succeed, as the feasibility of the system state must be guaranteed at any time during the optimization process.

\subsection{Optimization problem decomposition}
\label{subsec:problemdecomposition}

\def\cli{r} 

Let the compensators be divided into $\ell$ possibly overlapping subsets
$\mathcal C_1,\ldots, \mathcal C_\noclusters$
with $\bigcup_{\cli=1}^\noclusters \mathcal C_\cli = \compensators$. 
Nodes belonging to the same subset (or \emph{cluster}) are able to communicate each other, and they are therefore capable of coordinating their actions and 
sharing their measurements.
Each cluster is also provided with some computational capabilities for processing the collected data. 
This processing is performed by a cluster supervisor, capable of collecting all the necessary data from the compensators belonging to the cluster and to return the result of the data processing to them (see Figure~\ref{fig:microgrid_model}, upper panel). The cluster supervisor can possibly be one of the compensators in the cluster. 
An alternative implementation consists in providing all the compensators in the cluster with identical instances of the same instructions. In this case, after sharing the necessary data, they will perform exactly the same data processing, and no supervising unit is needed.

The proposed optimization algorithm consists of the following repeated steps, which occur at time instants $T_t\in{\realnumbers}$, $t=0,1,2,\ldots$.
\begin{enumerate}
\item A cluster $\mathcal C_{\cli(t)}$ is chosen, where $\cli(t) \in \{1,\ldots, \noclusters\}$.
\item The compensators in $\mathcal C_{\cli(t)}$, by sharing their state (the injected reactive powers $q_k$, $k\in \mathcal C_{\cli(t)}$) and their measurements (the voltages $u_k$, $k\in \mathcal C_{\cli(t)}$), determine their new feasible state that minimizes the global cost $J(q)$, solving the optimization \emph{subproblem} in which the nodes not belonging to $\mathcal C_{\cli(t)}$ keep their state constant.
\item The compensators in $\mathcal C_{\cli(t)}$
update their states $q_k$, $k\in \mathcal C_{\cli(t)}$ by injecting the new reactive powers computed in the previous step.
\end{enumerate}

In the following, we provide the necessary tools for implementing these steps. In particular, we show how the compensators belonging to the cluster $\compensators_{\cli(t)}$ can update their state to minimize the total power distribution losses based only on their partial knowledge of the electrical network topology and on the measurements they can perform.

Consider the optimization \eqref{eq:Qoptimizationproblem}, and let us distinguish the controllable and the uncontrollable components of the vector $q$.
%
%
Assume with no loss of generality that the first $\nocompensators$ components of $q$ are controllable
(i.e. they describe the reactive power injected by the compensators)
and that the remaining $\nonodes-\nocompensators$ are not controllable.
The state $q$ is thus partitioned as
$$
q=\begin{bmatrix}q_{\mathcal C}\\ q_{\nodes \backslash {\mathcal C}}\end{bmatrix}
$$
where $q_{\mathcal C}\in\realnumbers^\nocompensators$ and $q_{\nodes \backslash {\mathcal C}}\in\realnumbers^{\nonodes-\nocompensators}$.
According to this partition of $q$, let us also partition the matrix $\Re(\green)$ as
\begin{equation}
\Re(\green)
=
\left[\begin{array}{cc}
	M & N \\ 
	N^T & Q
\end{array}\right].
\label{eq:blockpartitioning}
\end{equation}

Let us also introduce some convenient notation. Consider the subspaces
$$
\mathcal S_\cli:=
\left\{
q_{\mathcal C} \in \realnumbers^\nocompensators\ :\ 
\sum_{j\in\compensators_r} \left[q_{\mathcal C}\right]_j = 0\ ,
\left[q_{\mathcal C}\right]_j = 0\ \forall j\not\in\compensators_\cli
\right\},
$$
and the $\nocompensators \times \nocompensators$ matrices
\begin{align*}
\Omega	&:=\frac{1}{2m}\sum_{h,k \in \compensators}(\1_h-\1_k)(\1_h-\1_k)^T = I-\frac{1}{\nocompensators}\1\1^T, \\
\Omega_\cli &:=\frac{1}{2|\mathcal C_\cli|}\sum_{h,k\in\mathcal C_r}(\1_h-\1_k)(\1_h-\1_k)^T \\
	&= \diag(\1_{\compensators_\cli}) - \frac{1}{|\compensators_\cli|}\1_{\compensators_\cli}\1_{\compensators_\cli}^T,
\end{align*}
where $\diag(\1_{\compensators_\cli})$ is the $m \times m$ diagonal matrix whose diagonal is the vector $\1_{\compensators_\cli}$.

Notice that $\mathcal S_\cli = \Imag \Omega_\cli$.
We list some useful properties:
\begin{enumerate}[i)]
\item $\Omega_\cli^2=\Omega_\cli$ and $\Omega_\cli^\sharp = \Omega_\cli$, where $\sharp$ means pseudoinverse.
\item The matrices $\Omega_\cli$, $\Omega_\cli M \Omega_\cli$ and $(\Omega_\cli M \Omega_\cli)^\sharp$ have entries different from zero only in position $h,k$ with $h,k\in \mathcal C_\cli$.
\item  $\ker (\Omega_\cli M \Omega_\cli)^\sharp = \ker \Omega_\cli$
and $\Imag(\Omega_\cli M \Omega_\cli)^\sharp = \Imag \Omega_\cli$, and thus
$(\Omega_\cli M \Omega_\cli)^\sharp = (\Omega_\cli M \Omega_\cli)^\sharp \Omega_\cli$.
\end{enumerate}

\def\qopti{q^\text{opt, $\cli$}_{\mathcal C}}

If the system is in the state $q =\left[\begin{smallmatrix}q_{\mathcal C}\\ q_{\nodes \backslash {\mathcal C}}\end{smallmatrix}\right]$ and cluster $\mathcal C_\cli$ is activated,
the corresponding cluster supervisor has to solve the optimization problem 
\begin{equation}
\begin{split}
\qopti := \argmin_{q'_{\mathcal C}}		 
	&\quad	J\left(\begin{bmatrix}q'_{\mathcal C}\\ q_{\nodes \backslash {\mathcal C}}\end{bmatrix}\right)\\
	\text{subject to}& \quad q'_{\mathcal C} - q_{\mathcal C}\in \mathcal S_\cli.
\end{split}
\label{eq:optimization_subproblem}
\end{equation}
Using the standard formulas for quadratic optimization it can be shown that 
\begin{equation}\label{eq:argmin23}
\begin{split}
\qopti
	&=
	q_{\mathcal C} - (\Omega_\cli M \Omega_\cli)^\sharp \nabla J,
\end{split}
\end{equation}
where 
\begin{equation}
\nabla J 
= Mq_{\mathcal C}+ N q_{\nodes \backslash {\mathcal C}} 
\label{eq:gradientExpression}
\end{equation} 
is the gradient of $J\left(\left[\begin{smallmatrix}q_{\mathcal C}\\ q_{\nodes \backslash {\mathcal C}}\end{smallmatrix}\right]\right)$
with respect to the decision variables $q_{\mathcal C}$.

Notice that, 
by using property ii) of the matrices $(\Omega_\cli M \Omega_\cli)^\sharp$, 
we have
\begin{equation}
\left[ \qopti \right]_h = \!
\begin{cases}
q_h
-
\sum_{k\in\mathcal C_\cli}
\left[\left(\Omega_\cli M \Omega_\cli \right)^\sharp\right]_{hk}
\left[\nabla J\right]_k
&
\!\!\text{if $h \in \compensators_\cli$} \\
q_h
&
\!\!\text{if $h \notin \compensators_\cli$.}
\end{cases}
\label{eq:node_law}
\end{equation}

In the following, we show how the supervisor of $\mathcal C_\cli$ can perform an approximate computation of $\left[ \qopti \right]_h, \; h \in \compensators_\cli$,
based on the information available within $\mathcal C_\cli$.

\subsection{Hessian reconstruction from local topology information}

Compensators belonging to the cluster $\compensators_\cli$  can infer some information about the Hessian $M$.
More precisely they can determine the elements of the matrix $(\Omega_\cli M \Omega_\cli)^\sharp$ appearing in \eqref{eq:node_law}. 
Define $R^\text{eff}$ as a $m\times m$ matrix with entry in $h,k \in \compensators$ equal to $\Re\left(Z^\text{eff}_{hk}\right)$, 
where $Z^\text{eff}_{hk}$ are the mutual effective impedances  between pairs compensators $h,k$. 
From \eqref{eq:effectiveResistance} and \eqref{eq:blockpartitioning} we have that
\begin{equation}
[R^\text{eff}]_{hk}=\Re\left(Z^\text{eff}_{hk}\right)=(\1_h-\1_k)^T M(\1_h-\1_k)
\label{eq:ReffM}
\end{equation}
and so we can write 
\begin{equation*}
\begin{split}
R^\text{eff} &= \sum_{h,k\in\mathcal C}\1_h(\1_h-\1_k)^TM(\1_h-\1_k)\1_k^T \\
			&=\diag(M)\1\1^T+\1\1^T\diag(M)-2 M 
\end{split}
\end{equation*}
where $\diag(M)$ is the $m\times m$ diagonal matrix having the same diagonal elements of $M$.
Consequently, since $\Omega_\cli \1=0$, we have that $\Omega_\cli R^\text{eff} \Omega_\cli=-2\Omega_\cli M \Omega_\cli$
and then
\begin{equation}\label{eq:HdaRsimpl}
\Omega_\cli M \Omega_\cli=-\frac{1}{2}\Omega_\cli R^\text{eff} \Omega_\cli.
\end{equation}
Because of the specific sparseness of the matrices $\Omega_\cli$
and via \eqref{eq:ReffM}, we can then argue that the elements of the matrix $\Omega_\cli M \Omega_\cli$ can be computed by the supervisor of cluster $\compensators_\cli$ from the mutual effective impedances $Z^\text{eff}_{hk}$ for $h,k\in\compensators_\cli$. These impedances are assumed to be known by the cluster supervisor. This is a reasonable hypothesis, since the mutual effective impedances can be obtained via online estimation procedures as in \cite{Teodorescu_2007} or via ranging technologies over power line communications as suggested in \cite{Costabeber_2011_Ranging}. 
Moreover, in the common case in which the low-voltage power distribution grid is radial, the mutual effective impedances $Z^\text{eff}_{hk}$ correspond to the impedance of the only electric path connecting node $h$ to node $k$, and can therefore be inferred from \emph{a priori} knowledge of the local microgrid topology.

Then the cluster supervisor needs to compute the pseudo-inverse of $\Omega_\cli M \Omega_\cli$.
Notice that all these operations have to be executed offline once, as these coefficients depend only on the grid topology and impedances,
and thus are not subject to change.

\subsection{Gradient estimation via local voltage measurement}

Assume that nodes in $\compensators_\cli$ can measure the grid voltage $u_k$, $k\in \mathcal C_\cli$, at their point of connection. In practice, this can be done via \emph{phasor measurement units} that return both the amplitudes $|u_k|$ and phases $\angle u_k$ of the measured voltages (see 
\cite{Phadke_1993_Synchronizedphasormeasurements,Phadke_2008_Synchronizedphasormeasurements}).
Let us moreover assume the following.

\begin{assumption}
\label{ass:theta}
All power lines in the microgrid have the same inductance/resistance ratio, i.e.
$$
\cplxZ = e^{j\theta} Z
$$
where $Z$ is a diagonal real-valued matrix,
whose elements are $[Z]_{ee} = |z_e|$.
Consequently, $\lapl = e^{-j \theta} A^T Z^{-1}A$, and $\realgreen := e^{-j\theta}\green$ is a real-valued matrix.
\end{assumption}

This assumption seems reasonable in most practical cases 
(see for example the IEEE standard testbeds \cite{Kersting2001}).
The effects of this  approximation will be commented in Section \ref{sec:simulations}.


\def\uc{u_\compensators}

Similarly to the definition of $q_\compensators$, we define by $\uc$ the vector in $\complexnumbers^\nocompensators$ containing the components $u_k$ with 
$k\in \compensators$, i.e. the voltages that can be measured by the compensators.
Consider now the maps $K_r:\complexnumbers^\nocompensators \to \complexnumbers^\nocompensators$, $r=1,\ldots,\ell$, defined component-wise as follows
\begin{multline}
\left[K_r(\uc)\right]_k:= \\
\begin{cases}
\frac{1}{|\compensators_\cli|}\sum_{v \in \compensators_\cli} 
|u_v||u_k|\sin(\angle u_v-\angle u_k-\theta) 
& \text{if $k\in \compensators_\cli$}\\
0
& \text{if $k\not\in \compensators_\cli$.}
\end{cases}
\label{eq:def_K}
\end{multline}
Notice first that $K_r(\uc)$ can be computed locally by the supervisor of the cluster $\cli$, from the voltage measurements of the compensators belonging to $\compensators_\cli$. The following result shows how the function $K_\cli(\uc)$ is related with the gradient of the cost function, introduced in \eqref{eq:gradientExpression}.

\begin{proposition}
Let $K_r(\uc)$ be defined as in \eqref{eq:def_K}.
Then the elements $[\nabla J]_k$, $k \in \compensators_\cli$, 
of the gradient introduced in (\ref{eq:gradientExpression}) 
can be decomposed as
\begin{equation}
 \left[ \nabla J \right]_k 
 =
 \cos\theta \left[K_r(\uc)\right]_k 
+
\alpha_r 
+
\frac{1}{U_N}
\left[\tilde K_r(\uc)\right]_k,
\label{eq:Kdecomp}
\end{equation}
where
$\left[\tilde K_r(\uc)\right]_k$ is bounded as $U_N \rightarrow \infty$,
and $\alpha_r$ is a constant independent of the compensator index $k$.
\label{pro:Kdecomp}
\end{proposition}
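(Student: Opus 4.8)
The plan is to reduce the statement to a real-valued identity and then match it, term by term, against the Taylor expansion of Proposition~\ref{proposition:taylor_expansion_complex}. First I would invoke Assumption~\ref{ass:theta}: since $\green=e^{j\theta}\realgreen$ with $\realgreen$ real, we have $\Re(\green)=\cos\theta\,\realgreen$, and therefore for every $k\in\compensators_r$ the gradient entry \eqref{eq:gradientExpression} reads $[\nabla J]_k=[\Re(\green)q]_k=\cos\theta\,[\realgreen q]_k$. Factoring out $\cos\theta$, the proposition becomes equivalent to showing $[\realgreen q]_k=[K_r(\uc)]_k+\beta_r+O(1/U_N)$ for some constant $\beta_r$ independent of $k$ (with $\alpha_r=\cos\theta\,\beta_r$ and $[\tilde K_r]_k$ collecting the remainder). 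The single most useful algebraic fact is $e^{-j\theta}[\green\bar s]_v=[\realgreen\bar s]_v=[\realgreen p]_v-j[\realgreen q]_v$, so that $\Im\!\big(e^{-j\theta}[\green\bar s]_v\big)=-[\realgreen q]_v$; this is what ties the imaginary part of a voltage correction to the quantity $[\realgreen q]_v$ that must be reconstructed.

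Next I would rewrite $K_r$ in complex form. Using $|u_v||u_k|\sin(\angle u_v-\angle u_k-\theta)=\Im\!\big(e^{-j\theta}u_v\bar u_k\big)$, the definition \eqref{eq:def_K} becomes $[K_r(\uc)]_k=\frac{1}{|\compensators_r|}\sum_{v\in\compensators_r}\Im\!\big(e^{-j\theta}u_v\bar u_k\big)$ for $k\in\compensators_r$. Into this I substitute the expansion $u_v=e^{j\phi}\big(U_N+[\green\bar s]_v/U_N+O(1/U_N^2)\big)$ of Proposition~\ref{proposition:taylor_expansion_complex}, and likewise for $\bar u_k$. Because the global phase enters as $e^{j\phi}$ in $u_v$ and $e^{-j\phi}$ in $\bar u_k$, it cancels in the product $u_v\bar u_k$, which is precisely why $K_r$ can be evaluated from local measurements without knowing the reference angle $\phi$. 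Expanding the product and sorting by powers of $U_N$ leaves three kinds of contributions: the leading $U_N^2$ term, the $O(1)$ terms built from the voltage corrections $[\green\bar s]$, and an $O(1/U_N)$ tail controlled by the bounded functions $c_v,d_v$ of Proposition~\ref{proposition:taylor_expansion_complex}.

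The heart of the argument, and the step I expect to be the main obstacle, is the bookkeeping of the $O(1)$ terms. The contribution coming from $u_v$'s correction, once averaged over $v\in\compensators_r$, yields $\frac{1}{|\compensators_r|}\sum_{v}\Im(e^{-j\theta}[\green\bar s]_v)=-\frac{1}{|\compensators_r|}\sum_{v}[\realgreen q]_v$, which is independent of $k$ and hence belongs to $\beta_r$, together with the $-U_N^2\sin\theta$ piece of the leading term. The delicate contribution is the one attached to node $k$ itself, arising from the product of the $U_N e^{j\phi}$ leading term of $u_v$ with the $O(1/U_N)$ correction of $\bar u_k$: the difficulty is that this term naively carries an admixture of $[\realgreen p]_k$ alongside $[\realgreen q]_k$, and I must show, using the uniform inductance/resistance ratio of Assumption~\ref{ass:theta}, that its imaginary part reduces to $[\realgreen q]_k$ up to a $k$-independent constant. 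Establishing this clean reduction of the $k$-indexed cross term — and then verifying that the collected remainder $[\tilde K_r(\uc)]_k$ is bounded uniformly in $U_N$, which follows from the $O(1)$ bounds on $c_v$ and $d_v$ — is where essentially all the work lies; everything else is algebraic rearrangement and collecting the $k$-independent pieces into $\alpha_r$.
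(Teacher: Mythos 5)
Your overall strategy --- rewrite $K_r$ in complex form, substitute the expansion of Proposition~\ref{proposition:taylor_expansion_complex}, sort by powers of $U_N$, and absorb the $k$-independent pieces into $\alpha_r$ --- is exactly the paper's, and your handling of the leading $-U_N^2\sin\theta$ term, the $v$-averaged correction, the cancellation of $\phi$, and the $O(1/U_N)$ remainder is correct. The problem is that you defer precisely the step on which the proposition stands or falls (the ``clean reduction of the $k$-indexed cross term''), and with the complex form you adopt that step does not merely require work: it fails. Writing $[K_r]_k=\frac{1}{|\compensators_r|}\sum_{v}\Im\bigl(e^{-j\theta}u_v\bar u_k\bigr)$, the $k$-dependent $O(1)$ contribution comes from the product of the leading term $U_Ne^{j\phi}$ of $u_v$ with $e^{-j\phi}\overline{[\green\bar s]_k}/U_N$ from $\bar u_k$. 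Since $\green=e^{j\theta}\realgreen$ with $\realgreen$ real, $\overline{[\green\bar s]_k}=e^{-j\theta}\bigl([\realgreen p]_k+j[\realgreen q]_k\bigr)$, so after the external prefactor $e^{-j\theta}$ you are left with
\begin{equation*}
\Im\Bigl(e^{-2j\theta}\bigl([\realgreen p]_k+j[\realgreen q]_k\bigr)\Bigr)
=\cos(2\theta)\,[\realgreen q]_k-\sin(2\theta)\,[\realgreen p]_k ,
\end{equation*}
which is not $[\realgreen q]_k$ plus a $k$-independent constant. Assumption~\ref{ass:theta} has already been spent in writing $\green=e^{j\theta}\realgreen$; it offers nothing further with which to remove the $k$-dependent $[\realgreen p]_k$ admixture, so the asserted reduction cannot be established in this form.

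The identity only comes out clean when the conjugation is placed on the averaged voltages rather than on $u_k$: the paper's proof starts from $[K_r]_k=-\Im\bigl[e^{-j\theta}\tfrac{1}{|\compensators_r|}\bigl(\1_{\compensators_r}^T\bar u\bigr)u_k\bigr]$, for which the $k$-dependent $O(1)$ term is $-\Im\bigl(e^{-j\theta}[\green\bar s]_k\bigr)=-\Im\bigl([\realgreen p]_k-j[\realgreen q]_k\bigr)=[\realgreen q]_k$ exactly, because the external $e^{-j\theta}$ cancels the $e^{j\theta}$ carried by $\green$; the ``dirty'' $e^{-2j\theta}$ factor then lands on the $v$-averaged term, which is harmless since it is independent of $k$ and is absorbed into $\alpha_r$ (this is why the paper's $\alpha_r$ contains $e^{-j2\theta}$). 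The two complex forms differ by the sign of $\theta$ inside the sine, so there is a sign discrepancy between \eqref{eq:def_K} as written and the identity the proof actually needs; your proposal inherits the version for which the statement does not hold. To repair the argument you must fix the conjugation so that the phase cancels at node $k$, and then carry out the bookkeeping explicitly rather than asserting its outcome.
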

\begin{proof}
It can be seen that, for $k \in \compensators_\cli$,
$$
\left[K_r(\uc)\right]_k 
= -\Im \left[  e^{-j\theta} \frac{1}{|\compensators_\cli|} 
\left(\1_{\compensators_\cli}^T \bar u \right) u_k \right].
$$
By using \eqref{eq:approximate_solution}, and via a series of simple but rather tedious computation it can be seen that \eqref{eq:Kdecomp} holds with
\begin{align*}
\alpha_r &= 
\frac{\cos\theta }{|\compensators_\cli|} \Im \left[  e^{-j2\theta} 
\left(\1_{\compensators_\cli}^T Xs \right) \right]\\
&\quad +\frac{\cos\theta }{|\compensators_\cli|U_N} \Im \left[e^{-j\theta}
\left(\1_{\compensators_\cli}^T \bar d \right) \right]
-\sin \theta \cos\theta  U_N^2
\end{align*}
and
\begin{align*}
\left[\tilde K_r(\uc)\right]_k & = 
\cos\theta\Im\left[e^{-j\theta} d_k \right]\\
&\quad+
\frac{\cos\theta}{U_N |\compensators_\cli|}\Im \left[e^{-j\theta}
\left(\1_{\compensators_\cli}^T X s \right)\left[ X \bar s \right]_k \right]\\
&\quad+
\frac{\cos\theta}{U_N^2 |\compensators_\cli|}\Im \left[e^{-j2\theta}
\left(\1_{\compensators_\cli}^T X s \right)d_k\right]\\
&\quad+\frac{\cos\theta}{U_N^2 |\compensators_\cli|}\Im \left[\left(\1_{\compensators_\cli}^T \bar d \right)
\left[ X \bar s \right]_k \right]\\
&\quad+
\frac{\cos\theta}{U_N^3 |\compensators_\cli|}\Im \left[e^{-j\theta}
\left(\1_{\compensators_\cli}^T \bar d \right) d_k\right],
\end{align*}
which according to Proposition \ref{proposition:taylor_expansion_complex} is bounded when $U_N\to\infty$.
\end{proof}

\subsection{Description of the algorithm}

By applying Proposition \ref{pro:Kdecomp} to expression \eqref{eq:node_law} for 
$\qopti$ we obtain that, for $h \in \compensators_\cli$, 
\begin{equation}
\begin{split}
\left[\qopti\right]_h &=
	q_h 
-
\cos \theta 
\sum_{k\in\mathcal C_\cli}
\left[\left(\Omega_\cli M \Omega_\cli \right)^\sharp\right]_{hk}
\left[K_r(\uc)\right]_k \\
&\quad -
\frac{\cos\theta}{U_N} 
\sum_{k\in\mathcal C_\cli}
\left[\left(\Omega_\cli M \Omega_\cli \right)^\sharp\right]_{hk}
\left[\tilde K_r(\uc)\right]_k,
\end{split}
\label{qott-appr1}
\end{equation}
where the term $\alpha_r$ is canceled because of the specific kernel of $(\Omega_r M \Omega_r)^\sharp$.
Because $\frac{1}{U_N}\tilde K_r$ is infinitesimal as $U_N\to\infty$, then equation (\ref{qott-appr1}), 
together with identity \eqref{eq:HdaRsimpl},
suggests the following approximation for $\qopti$ 
\begin{equation}\label{qott-appr111}
\left[\widehat{\qopti}\right]_h=
	q_h + 2 \cos \theta \sum_{k\in\mathcal C_\cli}
\left[\left(\Omega_\cli R^\text{eff} \Omega_\cli \right)^\sharp\right]_{hk}
\left[K_r(\uc)\right]_k
\end{equation}
which can be computed for all $h \in \compensators_\cli$ by the supervisor of cluster $r$ from local information, 
namely the local network electric properties and the voltage measurements at the compensators belonging to $\compensators_\cli$.
Notice also that, 
as $\Imag(\Omega_\cli R^\text{eff} \Omega_\cli)^\sharp = \Imag \Omega_\cli$, 
we have that $\sum_h \left[ \qopti \right]_h - q_h = 0$, 
and therefore the constraint $\1^T q = 0$ remains satisfied.

Based on this result, we propose the following iterative algorithm,
whose online part is also depicted in Figure~\ref{fig:algorithm} in its block diagram representation.

\noindent\hrulefill
\textsc{ Offline procedure }
\noindent\hrulefill
\begin{itemize}
\item The supervisor of each cluster $\compensators_\cli$ gathers
the system parameter $[R^\text{eff}]_{hk}$, $h,k\in \compensators_\cli$;
\item each supervisor computes the elements $\left[\left(\Omega_\cli R^\text{eff} \Omega_\cli\right)^\sharp\right]_{hk}$, $h,k \in \compensators_\cli$.
\end{itemize}

\noindent\hrulefill
\textsc{ Online procedure }
\noindent\hrulefill\\
At each iteration $t$:
\begin{itemize}
\item a cluster $\compensators_\cli$ is randomly selected among all the clusters;
\item each compensator $h \in \compensators_\cli$ measures its voltage $u_h$;
\item each compensator $h \in \compensators_\cli$ sends its voltage $u_h$
and its state $q_h$ (the amount of reactive power they are injecting into the grid)
to the cluster supervisor;
\item the cluster supervisor computes $K_r(\uc)$, via \eqref{eq:def_K};
\item the cluster supervisor computes $\left[ \widehat \qopti \right]_h$ for all $h \in \compensators_\cli$ via \eqref{qott-appr111};
\item each compensator $h \in \compensators_\cli$ receives the value $\left[ \widehat \qopti \right]_h$ from the cluster supervisor and updates its injection of reactive power into the grid (i.e. $q_h$) to that value.
\end{itemize}
\noindent\hrulefill

\begin{figure}[tb]
\centering
\includegraphics[width=0.95\IEEECOLWIDTH]{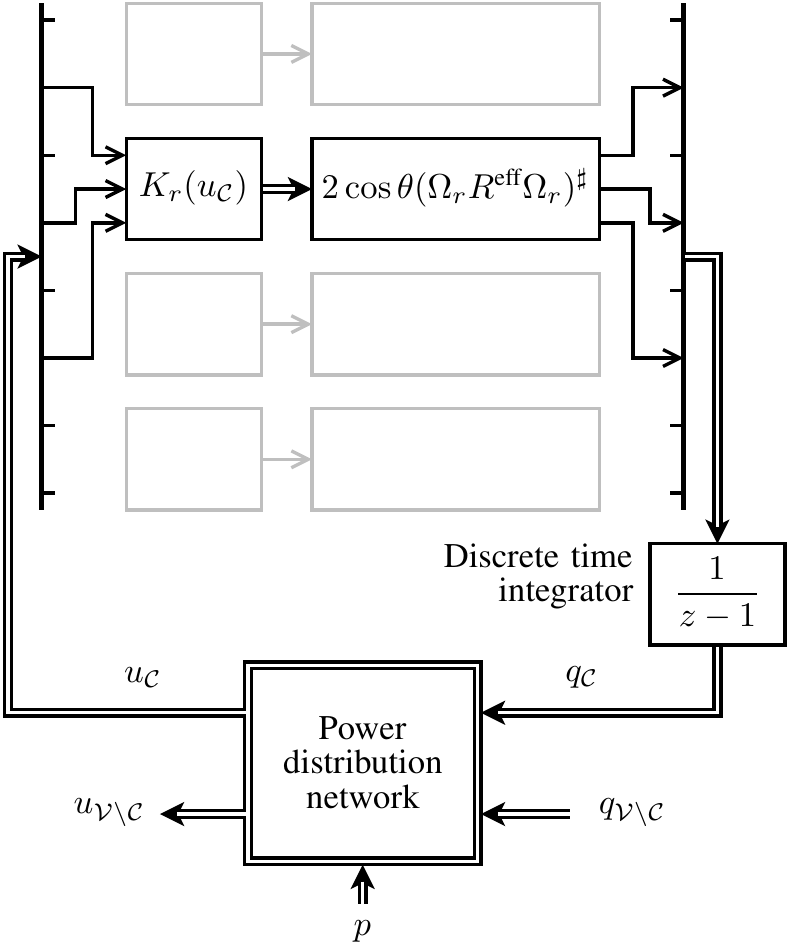}
\caption{Block diagram representation of the proposed distributed algorithm. The repeated solution of different optimization subproblems corresponds to a switching discrete time feedback system. Double arrows represent vector signals. The \emph{power distribution network} block receives, as inputs, the active and reactive power injection of both compensators (nodes in $\compensators$) and loads (nodes in $\nodes \backslash \compensators$). It gives, as outputs, the voltages at the nodes. The voltages at the compensators ($u_\compensators$) can be measured, and provide the feedback signal for the control laws. Notice that the control law corresponding to a specific cluster $r$ requires only the voltage measurements performed by the compensators belonging to the cluster ($u_k, k \in \compensators_\cli$), and it updates only the reactive power injected by the same compensators ($q_k, k \in \compensators_\cli$), while the other compensators hold their reactive power injection constant.}
\label{fig:algorithm}
\end{figure}

\begin{remark}
One possible way of randomly selecting one cluster $\mathcal C_\cli$ at each iteration, is the following. 
Let each cluster supervisor be provided with a timer, each one governed by an independent Poisson process, which triggers the cluster after exponentially distributed waiting times.
If the time required for the execution of the algorithm is negligible with respect to the typical waiting time of the Poisson processes, this strategy allows to construct the sequence of independent symbols $\cli(t)$, $t=0,1,2,\ldots$, in a purely decentralized way (see for example \cite{Boyd2006}).
\end{remark}

\begin{remark}
It is important to notice that the distributed implementation of the algorithm is only possible because of 
the specific physical system that we are considering. 
By sensing the system \emph{locally}, nodes can infer some \emph{global} information on the system state (namely, the gradient of the cost function) that otherwise would depend on the states of all other nodes.
For this reason, it is necessary that, after each iteration of the algorithm, the microgenerators actuate the system implementing the optimization step, so that subsequent measurement will reflect the state change.
This approach resembles some other applications of distributed optimization,
e.g. the radio transmission power control in wireless networks \cite{Grandhi_1994_Distributedpowercontrol} and the traffic congestion protocol for wired data networks \cite{Kelly_1998_Ratecontrolcommunication,Low_2003_dualitymodelof}.
Also in these cases, the iterative tuning of the decision variables (radio power, transmission rate) depends on 
congestion feedback signals that are function of the entire state of the system.
However, these signals can be detected locally by each agent by measuring error rates, signal-to-noise ratios, or specific feedback signals in the protocol.
\end{remark}


\section{Convergence of the algorithm}
\label{sec:convergence}

In this section we will give the conditions ensuring that the proposed iterative algorithm converges to the optimal solution, and we will analyze its convergence rate.  In fact, according to \eqref{qott-appr111}, the dynamics of the algorithm is described by the iteration  
\begin{equation}
q_{\mathcal C}(t+1)
=
q_{\mathcal C}(t) -
\cos \theta (\Omega_{r(t)}M \Omega_{r(t)})^\sharp K_{r(t)}(\uc(t)).
\label{eq:updateStepreal}
\end{equation}
where the voltages $\uc(t)$ are a function of the reactive power $q_{\mathcal C}(t)$, $q_{\mathcal V\setminus\mathcal C}$ and of the active power $p$.
We will consider instead the iteration
\begin{equation}
q_{\mathcal C}(t+1)
=
q_{\mathcal C}(t)
-
\left( \Omega_{r(t)} M \Omega_{r(t)} \right)^\sharp (Mq_{\mathcal C}(t)+Nq_{\mathcal V\setminus\mathcal C}),
\label{eq:updateStep}
\end{equation}
which descends from \eqref{eq:argmin23} and which 
differs from \eqref{eq:updateStepreal} for the term
$$
\frac{\cos \theta}{U_N}\left( \Omega_{r(t)} M \Omega_{r(t)} \right)^\sharp\tilde K_{r(t)}(\uc),
$$
as one can verify by inspecting \eqref{qott-appr1}.
This difference is infinitesimal as $U_N\to\infty$.
Therefore in the sequel we will study the convergence of (\ref{eq:updateStep}), while
numerical simulations in Section~\ref{sec:simulations} will support the proposed approximation, showing how both the algorithm steady state performance and the rate of convergence are practically the same for the exact system and the approximated model.

\begin{remark}
A formal proof of the convergence of the discrete time system (\ref{eq:updateStepreal}) could be done based of the mathematical tools proposed in Sections 9 and 10 of \cite{khalil} dedicated to the stability of perturbed systems. However, in our setup there are a number of complications that make the application of those techniques not easy. The first is that, since $u_{\mathcal C}(t)$ is related to $q_{\mathcal C}(t)$ in an implicit way, then (\ref{eq:updateStepreal}) is in fact a nonlinear implicit (also called descriptor) system with state $q_{\mathcal C}(t)$. The second difficulty comes from the fact that (\ref{eq:updateStepreal}) is a nonlinear randomly time-varying (also called switching) system. 
\end{remark}

In order to study the system (\ref{eq:updateStep}),
we introduce the auxiliary variable 
$x = q_{\mathcal C} - q_{\mathcal C}^\text{opt} \in \realnumbers^m$,
where $q^\text{opt}$  is the solution of the optimization problem \eqref{eq:Qoptimizationproblem},
and $q_{\mathcal C}^\text{opt}$ are the elements of $q^\text{opt}$ corresponding to the nodes in $\compensators$.
In this notation, it is possible to explicitly express the iteration
\eqref{eq:updateStep}
as a discrete time system in the form%
\begin{equation}
x(t+1) = F_{\cli(t)} x(t), \quad x(0)\in\ker \1^T,
\label{eq:linearUpdateLaw}
\end{equation}
where
\begin{equation}
F_\cli = I - (\Omega_\cli M \Omega_\cli)^\sharp M.
\label{eq:Fdefinition}
\end{equation}
The matrices $F_\cli$ have some nice properties which are listed below.
\begin{enumerate}[i)]
\item The matrices $F_\cli$ are self-adjoint matrices with respect to the inner product $\langle \cdot, \cdot \rangle_M$, defined as $\langle x, y \rangle_M := x^T M y$.
Therefore $F_\cli^T M = M F_\cli$, and thus they have real eigenvalues.
\item The matrices $F_\cli$ are projection operators, i.e. $F_\cli^2 = F_\cli$, 
and they are orthogonal projections with respect to the inner product $\langle \cdot, \cdot \rangle_M$, i.e. $\langle F_\cli x , F_\cli x - x \rangle_M =  x^T F_\cli^T M (F_\cli x - x) = 0$.
\end{enumerate}
Notice that, as $\Imag(\Omega_\cli M \Omega_\cli)^\sharp = \Imag \Omega_\cli$, 
$\ker \1^T$ is a positively invariant set for \eqref{eq:linearUpdateLaw}, namely $F_\cli(\ker \1^T)\subseteq \ker \1^T$ for all $\cli$.

It is clear that $q(t)$ converges to the optimal solution $q^\text{opt}$ if and only if 
$x(t)$ converges to zero for any initial condition $x(0)\in\ker \1^T$. 
A necessary condition for this to happen 
is that there are no nonzero equilibria in the discrete time system \eqref{eq:linearUpdateLaw},
namely that the only $x\in\ker \1^T$ such that $F_\cli x=x$ for all $\cli$ must be $x=0$. 

The following proposition provides a convenient characterization of this property,
in the form of a connectivity test.
Let ${\cal H}$ be the hypergraph whose nodes are the compensators, and whose edges are the clusters $\mathcal C_\cli$, $\cli=1,\ldots,\ell$. This hypergraph describes the communication network needed for implementing the algorithm: compensators belonging to the same edge (i.e. cluster) must be able to share their measurements with a supervisor and receive commands from it.

\begin{proposition}
\label{pro:connectedHypergraph}
Consider the matrices $F_\cli$, $\cli=1,\ldots,\ell$, defined in \eqref{eq:Fdefinition}. 
The point $x = 0$ is the only point in $\ker \1^T$ such that  $F_\cli x=x$ for all $\cli$
if and only if the hypergraph ${\cal H}$ is connected.
\end{proposition}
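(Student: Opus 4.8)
The plan is to reduce the fixed-point condition $F_\cli x = x$ to a transparent condition on the vector $Mx$, and then to recognize the resulting requirement as a statement about constant functions on the hypergraph $\mathcal H$.

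First I would rewrite the fixed-point equation. Since $F_\cli = I - (\Omega_\cli M \Omega_\cli)^\sharp M$, the identity $F_\cli x = x$ is equivalent to $(\Omega_\cli M \Omega_\cli)^\sharp M x = 0$, i.e. to $Mx \in \ker (\Omega_\cli M \Omega_\cli)^\sharp$. By property iii) this kernel equals $\ker \Omega_\cli$, so the condition becomes $\Omega_\cli (Mx) = 0$. A direct computation with $\Omega_\cli = \diag(\1_{\compensators_\cli}) - \frac{1}{|\compensators_\cli|}\1_{\compensators_\cli}\1_{\compensators_\cli}^T$ shows that $\Omega_\cli y = 0$ if and only if the entries of $y$ indexed by $\compensators_\cli$ are all equal to one another (the entries outside $\compensators_\cli$ being unconstrained). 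Therefore $F_\cli x = x$ holds exactly when $Mx$ takes a common value on the indices of the cluster $\compensators_\cli$, and hence ``$F_\cli x = x$ for all $\cli$'' is equivalent to ``$Mx$ is constant on every cluster $\compensators_\cli$''.

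Next I would interpret this cluster-wise constancy on the hypergraph $\mathcal H$, whose nodes are the compensators and whose edges are the clusters. Reading $Mx$ as a real function on the nodes of $\mathcal H$, the condition above says precisely that $Mx$ is constant along every hyperedge. Propagating equalities through the shared nodes of overlapping clusters, this forces $Mx$ to be constant on each connected component of $\mathcal H$; conversely any function constant on each component is constant on each hyperedge. Thus $Mx$ is constant on all clusters if and only if it is constant on each connected component of $\mathcal H$, and in particular, when $\mathcal H$ is connected, this is equivalent to $Mx \in \Imag \1$.

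It then remains to combine this with the constraint $x \in \ker \1^T$, which is where positive definiteness of $M$ enters --- the same property that makes $\langle \cdot,\cdot\rangle_M$ an inner product. For the ``if'' direction, suppose $\mathcal H$ is connected, so $Mx = c\,\1$; then $x = c\,M^{-1}\1$, and imposing $\1^T x = 0$ gives $c\,\1^T M^{-1}\1 = 0$, whence $c = 0$ and $x = 0$ since $M \succ 0$ forces $\1^T M^{-1}\1 > 0$. For the ``only if'' direction I would argue by contraposition: if $\mathcal H$ has $s \ge 2$ connected components $P_1,\dots,P_s$, I set $x = M^{-1} g$ with $g = \sum_i c_i \1_{P_i}$, so that $Mx = g$ is constant on each component and hence $F_\cli x = x$ for all $\cli$; the single linear constraint $\1^T x = \sum_i c_i\,(\1^T M^{-1} \1_{P_i}) = 0$ admits a nonzero solution $(c_1,\dots,c_s)$ because $s \ge 2$, and disjointness of the supports of the $\1_{P_i}$ then yields $g \ne 0$ and thus $x \ne 0$, exhibiting a nonzero common fixed point in $\ker \1^T$. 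The main obstacle is really the bookkeeping of the first step --- correctly identifying $\ker \Omega_\cli$ and justifying the passage through the pseudoinverse via property iii) --- after which the connectivity translation and the final nondegeneracy argument are routine, modulo the standing positive definiteness of $M$.
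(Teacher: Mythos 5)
Your proof is correct, and it takes a genuinely different route from the paper's. The paper proves this proposition as part of a three-way equivalence (Lemma~\ref{lem:threeways} in Appendix~\ref{app:convergence}): it first shows that the fixed-point condition is equivalent to the algebraic condition $\Imag[\Omega_1 \cdots \Omega_\ell] = \ker \1^T$ via an orthogonality argument in the $\langle\cdot,\cdot\rangle_M$ geometry, and then links that condition to connectivity by identifying $\sum_{\cli} 2|\mathcal C_\cli|\Omega_\cli$ as the Laplacian of an auxiliary weighted graph $\mathcal G_W$ whose kernel is $\Imag\1$ exactly when $\mathcal H$ is connected. You instead characterize the common fixed-point set directly: $F_\cli x = x$ for all $\cli$ iff $Mx$ is constant on every cluster, hence on every connected component, which reduces everything to a transparent count --- the fixed-point set is the $M$-preimage of the span of the component indicators, of dimension equal to the number of components, so its intersection with the hyperplane $\ker\1^T$ is trivial precisely when there is one component. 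Your version is more elementary (no auxiliary Laplacian, no image/kernel duality) and gives slightly more, namely an explicit description of the nonzero fixed points in the disconnected case. The one point to flag is that your ``only if'' direction writes $x = M^{-1}g$ and your ``if'' direction uses $\1^T M^{-1}\1 > 0$, so you lean on invertibility of $M$, whereas the paper's proof of this particular lemma only invokes positive definiteness of $M$ restricted to $\ker\1^T$ (via $x^TMx=0 \Rightarrow x=0$ for $x \in \ker\1^T$); since the paper itself uses $M^{-1}$ elsewhere (Lemma~\ref{lemma:propEi}, Lemma~\ref{lem:eigenvaluesOfFBar}), this is consistent with its standing assumptions, but in the ``if'' direction you could avoid it entirely by noting that $Mx = c\1$ and $\1^Tx=0$ give $x^TMx = c\,\1^Tx = 0$ directly.
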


\begin{proof}
The proposition is part of the statement of Lemma~\ref{lem:threeways} in the Appendix~\ref{app:convergence}.
\end{proof}

In order to prove that the connectivity of the hypergraph ${\mathcal H}$ is not only a necessary but also a sufficient condition for the convergence of the algorithm, we introduce the following assumption on the sequence $\cli(t)$.

\begin{assumption}
\label{ass:randomSubproblemExecution}
The sequence $\cli(t)$ is a sequence of independently, identically distributed symbols in $\{1,\ldots, \ell\}$,
with non-zero probabilities $\{\rho_\cli > 0, \cli = 1,\ldots, \ell\}$.
\end{assumption}

Let  $v(t) := \expect{x^T(t)Mx(t))} = \expect{J(q(t)) - J(q^\text{opt})}$ and consider the following performance metric
\begin{equation}
R = \sup_{x(0) \in \ker \1^T} \lim\sup \{v(t)\}^{1/t}
\label{eq:R}
\end{equation}
which describes the exponential rate of convergence to zero of $v(t)$. 
It is clear that $R<1$ implies the exponential mean square convergence of 
$q_\compensators(t)$ to the optimal solution $q_\compensators^\text{opt}$.

Using \eqref{eq:linearUpdateLaw}, we have
\begin{equation*}
\begin{split}
v(t) &= \expect{x(t)^T M x(t)} 
= \expect{x(t)^T \Omega M \Omega x(t)} \\
&= \expect{x(t-1)^T F_{\cli(t-1)}^T \Omega M \Omega F_{\cli(t-1)} x(t-1)} \\
&= x(0)^T \expect{F_{\cli(0)}^T \cdots F_{\cli(t\text{--}1)}^T \Omega M \Omega F_{\cli(t\text{--}1)} \cdots F_{\cli(0)}} x(0). \\
\end{split}
\end{equation*}
Let us then define
$$
\Delta(t) = \expect{F_{\cli(0)}^T \cdots F_{\cli(t-1)}^T \Omega M \Omega F_{\cli(t-1)} \cdots F_{\cli(0)}}.
$$

Via Assumption \ref{ass:randomSubproblemExecution}, 
we can argue that $\Delta(t)$ satisfies the following linear recursive equation 
\begin{equation}
\begin{cases}
\Delta(t+1) =   \mathcal L(\Delta(t))  \\
\Delta(0) = \Omega M \Omega,
\end{cases}
\label{eq:deltaSystem}
\end{equation}
where $\mathcal L(\Delta):=\expect{F_\cli^T \Delta F_\cli}$.
Moreover we have that
\begin{equation}
v(t) = x(0)^T \Delta(t) x(0).
\label{eq:deltaSystem1}
\end{equation}
Equations \eqref{eq:deltaSystem} and \eqref{eq:deltaSystem1} can be seen as a discrete time linear system with state $\Delta(t)$ and output $v(t)$.

\begin{remark}
The discrete time linear system \eqref{eq:deltaSystem} can be equivalently be described by the equation
$$
\vect\left( \Delta(t+1) \right) = \mathbf F \vect\left( \Delta(t) \right),
$$
where $\vect(\cdot)$ stands for the operation of vectorization and where
$$
\mathbf F := \expect{F_\cli^T \otimes F_\cli^T}\in\mathbb R^{\nocompensators^2 \times \nocompensators^2}.
$$
Notice that $\mathbf F $ is self-adjoint with respect to the inner product $\langle \cdot, \cdot \rangle_{M^{-1} \otimes M^{-1}}$ and so $\mathbf F$, and consequently $\mathcal L$, has real eigenvalues.
\end{remark}

Studying the rate $R$ (which can be proved to be the slowest reachable and observable mode of 
the system \eqref{eq:deltaSystem} with the output \eqref{eq:deltaSystem1}, see \cite{Bolognani_2011_GossiplikeDistributedOptimization})
is in general not simple. It has been done analytically and numerically for some special graph structures in
\cite{Bolognani_2011_GossiplikeDistributedOptimization}.
It is convenient to analyze its behavior indirectly, 
through another parameter which is easier to compute and to study,
as the following result shows.

\begin{theorem}
Assume that Assumption \ref{ass:randomSubproblemExecution} holds true and that the hypergraph ${\cal H}$ is connected.
Then the rate of convergence $R$, defined in \eqref{eq:R}, satisfy
$$R \le \beta<1,$$
where
\begin{equation*}
\beta = \max \{ |\lambda| \;|\; \lambda \in \lambda(\Fave), \lambda \ne 1\},
\end{equation*}
with $\Fave := \expect{F_\cli}$.
\label{thm:betaLessThanR}
\end{theorem}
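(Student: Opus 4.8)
The plan is to prove the two assertions $R\le\beta$ and $\beta<1$ simultaneously, by reducing the quadratic second-moment recursion to a one-step contraction governed by the \emph{first}-moment operator $\Fave$. The single observation that makes this possible is that the $F_r$ are orthogonal projections in the $M$-geometry.

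First I would record a one-step identity. Since $x(0)\in\ker\1^T$ and, as already noted, each $F_r$ leaves $\ker\1^T$ invariant, the whole trajectory stays in $\ker\1^T$. By properties i)--ii) each $F_r$ is an orthogonal projection for the inner product $\langle\cdot,\cdot\rangle_M$, so $F_r^T M F_r = M F_r^2 = M F_r$ and hence $\|F_r x\|_M^2 = x^T M F_r x = \langle x, F_r x\rangle_M$. Conditioning on $x(t)$ and using the independence and the strictly positive weights $\rho_r$ of Assumption~\ref{ass:randomSubproblemExecution},
\[
\mathbb E\!\left[\|x(t+1)\|_M^2 \,\middle|\, x(t)\right]
= \sum_r \rho_r \|F_r x(t)\|_M^2
= \langle x(t),\,\Fave\, x(t)\rangle_M .
\]
This is the heart of the argument: the quadratic dynamics of $v(t)$ collapse onto the linear operator $\Fave$.

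Next I would analyse the spectrum of $\Fave=\sum_r\rho_r F_r$. It is $M$-self-adjoint, and as a convex combination of $M$-orthogonal projections it is positive semidefinite with $\langle x,\Fave x\rangle_M\le\|x\|_M^2$; hence $\lambda(\Fave)\subset[0,1]$. Since $\langle\cdot,\cdot\rangle_M$ is an inner product, $M$ is invertible, and the $M$-orthogonal complement of the invariant subspace $\ker\1^T$ is $\Imag(M^{-1}\1)$, which is therefore invariant as well. A direct check, using $\Omega_r\1=0$ together with $\ker(\Omega_r M\Omega_r)^\sharp=\ker\Omega_r$ from property iii), gives $F_r M^{-1}\1=M^{-1}\1$ for every $r$, so $M^{-1}\1$ is an eigenvector of $\Fave$ with eigenvalue $1$; thus $\lambda(\Fave)=\{1\}\cup\lambda(\Fave|_{\ker\1^T})$. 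The connectivity hypothesis now enters: if $x\in\ker\1^T$ satisfies $\Fave x=x$, then equality holds in $\sum_r\rho_r\|F_r x\|_M^2=\|x\|_M^2$, and since every $\rho_r>0$ while $\|F_r x\|_M\le\|x\|_M$, this forces $F_r x=x$ for all $r$; Proposition~\ref{pro:connectedHypergraph} then yields $x=0$. Hence $\Fave|_{\ker\1^T}$ has no eigenvalue $1$, its eigenvalues lie in $[0,1)$, and consequently $\max\lambda(\Fave|_{\ker\1^T})\le\beta<1$. Applying this Rayleigh bound on $\ker\1^T$ to the one-step identity gives $v(t+1)\le\beta\,v(t)$, whence $v(t)\le\beta^t v(0)$ and $\limsup v(t)^{1/t}\le\beta$ for every $x(0)$, i.e. $R\le\beta<1$.

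I expect the delicate bookkeeping to be in the spectral splitting of $\Fave$: one must confirm that the single eigenvalue $1$ excluded in the definition of $\beta$ is exactly the one carried by the invariant direction $M^{-1}\1$, so that deleting it leaves precisely the spectrum of $\Fave|_{\ker\1^T}$, and that connectivity of $\mathcal H$ is what prevents the eigenvalue $1$ from reappearing inside $\ker\1^T$. Everything else is routine once the projection identity $\|F_r x\|_M^2=\langle x,F_r x\rangle_M$ is in hand.
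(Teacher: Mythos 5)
Your proof is correct, and it hinges on the same key spectral fact as the paper's: that $\Fave$ is $M$-self-adjoint, positive semidefinite, a contraction in $\|\cdot\|_M$, and (by connectivity of $\mathcal H$ together with $\rho_r>0$) has its only unit eigenvalue carried by $M^{-1}\1$, so that its Rayleigh quotient on $\ker\1^T$ is at most $\beta<1$. This is precisely the content of the paper's Lemma~\ref{lem:eigenvaluesOfFBar}, which you re-derive inline via the equality case of $\sum_r\rho_r\|F_rx\|^2_M\le\|x\|^2_M$ and Proposition~\ref{pro:connectedHypergraph}. Where you genuinely diverge is in how this one-step bound is iterated. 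The paper propagates the second moment through the operator-valued recursion $\Delta(t+1)=\mathcal L(\Delta(t))$ with $\mathcal L(\Delta)=\expect{F_r^T\Delta F_r}$, establishes $\Omega\mathcal L(\Omega M\Omega)\Omega\le\beta\,\Omega M\Omega$, and then needs two auxiliary lemmas (monotonicity of $\mathcal L$ with respect to the semidefinite order, and $\Omega\mathcal L^k(\Omega\Delta\Omega)\Omega=\Omega\mathcal L^k(\Delta)\Omega$) to conclude $\Omega\mathcal L^t(\Omega M\Omega)\Omega\le\beta^t\Omega M\Omega$. You instead condition on $x(t)$, use the projection identity $F_r^TMF_r=MF_r$ to collapse the second-moment dynamics onto the first-moment operator, obtaining $\expect{\|x(t+1)\|_M^2\mid x(t)}=\langle x(t),\Fave x(t)\rangle_M\le\beta\|x(t)\|_M^2$, and iterate the resulting scalar inequality $v(t+1)\le\beta v(t)$; the invariance of $\ker\1^T$ under each $F_r$ is all that is required. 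Your route is more elementary and dispenses with the $\Delta(t)$ machinery and its two supporting lemmas entirely; what the paper's formulation buys in exchange is the explicit linear system on $\Delta(t)$ that it uses elsewhere to characterize the exact rate $R$ as a reachable and observable mode.
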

\begin{proof}
The proof of Theorem~\ref{thm:betaLessThanR} is given in the Appendix~\ref{app:convergence}.
\end{proof}

The following result descends directly from it.

\begin{corollary}
Assume that Assumption \ref{ass:randomSubproblemExecution} holds true and that the hypergraph ${\cal H}$ is connected.
Then the state of the iterative algorithm described in Section \ref{subsec:problemdecomposition} converges in mean square to the global optimal solution.
\end{corollary}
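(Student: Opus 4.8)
The plan is to obtain the claim directly from Theorem~\ref{thm:betaLessThanR} together with the meaning of the rate $R$ in \eqref{eq:R}. Under Assumption~\ref{ass:randomSubproblemExecution} and connectivity of ${\cal H}$ the theorem gives $R \le \beta < 1$, hence $R < 1$. I would then simply unwind \eqref{eq:R}: since $R = \sup_{x(0) \in \ker \1^T} \limsup_t \{v(t)\}^{1/t} < 1$, for every admissible initial condition $x(0) \in \ker \1^T$ we have $\limsup_t \{v(t)\}^{1/t} < 1$, so choosing any $\rho$ with $\beta < \rho < 1$ yields $v(t) \le \rho^t$ for all large $t$. Thus the cost gap $v(t) = \expect{x(t)^T M x(t)} = \expect{J(q(t)) - J(q^\text{opt})}$ tends to zero exponentially fast.

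The one step I would carry out in detail is the passage from convergence of the cost to mean square convergence of the state $x(t) = q_\compensators(t) - q_\compensators^\text{opt}$, which requires $M$ to be positive definite on the subspace $\ker \1^T$ in which the whole trajectory lives (recall $x(0) \in \ker \1^T$ and that each $F_\cli$ leaves $\ker \1^T$ invariant). Take a nonzero $x \in \realnumbers^\nocompensators$ with $\1^T x = 0$ and pad it with zeros on the uncontrollable nodes to form $\tilde x \in \realnumbers^\nonodes$; then $\1^T \tilde x = 0$ and, by the block structure \eqref{eq:blockpartitioning}, $x^T M x = \tilde x^T \Re(\green) \tilde x$. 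Reading $\tilde x$ as a current injection $i$ with $\1^T i = 0$, identity \eqref{eq:Jtot} gives $\tilde x^T \Re(\green) \tilde x = \sum_{e \in \edges} |\xi_e|^2 \Re(z_e)$. Since $\tilde x \ne 0$ forces $A^T \xi = -\tilde x \ne 0$ through \eqref{eq:KCL}, we get $\xi \ne 0$, and as the lines are dissipative ($\Re(z_e) > 0$) this sum is strictly positive. Hence $x^T M x > 0$, i.e. $M$ is positive definite on $\ker \1^T$.

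Letting $\mu > 0$ be the smallest eigenvalue of $M$ restricted to $\ker \1^T$, invariance gives $x(t) \in \ker \1^T$ and therefore $\expect{\|x(t)\|^2} \le \mu^{-1} \expect{x(t)^T M x(t)} = \mu^{-1} v(t) \to 0$. This is precisely mean square convergence of $x(t)$ to zero, equivalently of $q_\compensators(t)$ to $q_\compensators^\text{opt}$ and of $q(t)$ to the global optimum $q^\text{opt}$, which proves the corollary. I do not expect a serious obstacle here: once Theorem~\ref{thm:betaLessThanR} is available the statement is essentially immediate, and the only genuinely non-routine ingredient is the strict positivity of the quadratic form $x^T M x$ on $\ker \1^T$ established above.
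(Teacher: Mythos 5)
Your proposal is correct and follows essentially the same route as the paper, which treats the corollary as an immediate consequence of Theorem~\ref{thm:betaLessThanR} together with the observation, stated just before that theorem, that $R<1$ implies exponential mean-square convergence of $q_\compensators(t)$ to $q_\compensators^\text{opt}$. The only material you add is the explicit verification that $M$ is positive definite on $\ker \1^T$ via the dissipation identity \eqref{eq:Jtot}; the paper invokes this positive definiteness without proof (e.g.\ in the proof of Lemma~\ref{lem:threeways}), so your argument is a legitimate and welcome filling-in of that gap rather than a different approach.
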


\begin{remark}
Observe that, by standard arguments based on Borel Cantelli lemma, the exponential convergence in mean square implies also almost sure convergence to the optimal solution.
\end{remark}

The tightness of $\beta$ as a bound for $R$ has been studied in \cite{Bolognani_2011_GossiplikeDistributedOptimization},
by evaluating both $\beta$ and $R$ analytically and numerically for some special graph topologies.
In the following we consider $\beta$ as a reliable metric for the evaluation of the algorithm performances.

The following result shows what is the best performance 
(according to the bound $\beta$ on the convergence rate $R$)
that the proposed algorithm can achieve.

\begin{theorem}
Consider the algorithm \eqref{eq:updateStep},
and assume that $\mathcal H$ describing the clusters $\mathcal C_\cli$
is an arbitrary connected hypergraph defined over the nodes $\compensators$. 
Let Assumption \ref{ass:randomSubproblemExecution} hold.
Then 
\begin{equation}\label{eq:boundbetagen}
\beta \ge 1 - \frac{\left(\sum_{\cli=1}^\ell \rho_\cli |\mathcal C_\cli|\right)-1}{m - 1}.
\end{equation}
In case all the sets $\mathcal C_\cli$ have the same cardinality $c$, namely $|\mathcal C_\cli|=c$ for all $\cli$, then
\begin{equation}\label{eq:boundbetasimp}
\beta \ge 1 - \frac{c-1}{m - 1}.
\end{equation}
\label{thm:bestbeta}
\end{theorem}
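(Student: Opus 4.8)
The plan is to lower-bound $\beta$ by relating it to the trace of $\Fave$. Since $\beta = \max\{|\lambda| : \lambda \in \lambda(\Fave), \lambda \neq 1\}$ and the matrices $F_\cli$ are $M$-self-adjoint projections with real eigenvalues, $\Fave = \expect{F_\cli}$ is $M$-self-adjoint and hence diagonalizable with real eigenvalues. I would first argue, using the properties of the $F_\cli$ established earlier (they are projections fixing $\ker\1^T$ appropriately, and by Proposition \ref{pro:connectedHypergraph} the hypergraph being connected guarantees $x=0$ is the only common fixed point in $\ker\1^T$), that $\Fave$ has eigenvalue $1$ with a controlled multiplicity. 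The key structural fact is that each $F_\cli = I - (\Omega_\cli M \Omega_\cli)^\sharp M$ acts as the identity on the orthogonal complement (in the $M$-inner product) of $\mathcal S_\cli = \Imag\Omega_\cli$, and projects away exactly the $(|\mathcal C_\cli|-1)$-dimensional subspace $\mathcal S_\cli$. Therefore $\trace(F_\cli) = m - (|\mathcal C_\cli| - 1)$, since the projection $F_\cli$ has rank $m - (|\mathcal C_\cli|-1)$.

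Next I would compute $\trace(\Fave) = \sum_\cli \rho_\cli \trace(F_\cli) = m - \sum_\cli \rho_\cli(|\mathcal C_\cli|-1)$. Because $\Fave$ is diagonalizable with real eigenvalues and $\1^T$ is fixed (the all-ones direction corresponds to the invariant $\1^T q = 0$ constraint, contributing the eigenvalue $1$ associated with the $M$-orthogonal complement of $\ker\1^T$), I would isolate that guaranteed unit eigenvalue and bound the remaining spectrum. The trace equals the sum of eigenvalues, so with one eigenvalue equal to $1$ on the non-$\ker\1^T$ part, the remaining $m-1$ eigenvalues (those relevant to the dynamics on $\ker\1^T$) sum to $\trace(\Fave) - 1 = m - 1 - \sum_\cli \rho_\cli(|\mathcal C_\cli|-1)$. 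Since $\beta$ is the maximum modulus among these $m-1$ eigenvalues, and the maximum is at least the average, I obtain
$$
\beta \ge \frac{1}{m-1}\left( m - 1 - \sum_{\cli=1}^\ell \rho_\cli(|\mathcal C_\cli|-1) \right) = 1 - \frac{\left(\sum_{\cli=1}^\ell \rho_\cli |\mathcal C_\cli|\right) - 1}{m-1},
$$
which is exactly \eqref{eq:boundbetagen}. The simplified bound \eqref{eq:boundbetasimp} follows immediately by substituting $|\mathcal C_\cli| = c$ and using $\sum_\cli \rho_\cli = 1$.

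The main obstacle I anticipate is carefully bookkeeping the eigenvalue $1$ and which subspace it lives on. I must be precise that the dynamics \eqref{eq:linearUpdateLaw} live on $\ker\1^T$, that on this invariant subspace the relevant spectrum of $\Fave$ is the $(m-1)$-element set defining $\beta$, and that the average-dominates-maximum inequality is applied to exactly those eigenvalues. A subtle point is confirming that the trace computation $\trace(F_\cli) = m - (|\mathcal C_\cli|-1)$ holds as an ordinary (basis-independent) trace even though $F_\cli$ is an $M$-orthogonal rather than ordinary-orthogonal projection; this is fine because trace is similarity-invariant and the rank of the projection is $m - \dim\mathcal S_\cli = m - (|\mathcal C_\cli|-1)$ regardless of the inner product. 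Verifying this rank count rigorously, via property ii)--iii) of $(\Omega_\cli M \Omega_\cli)^\sharp$ and the identity $\Imag(\Omega_\cli M \Omega_\cli)^\sharp = \Imag\Omega_\cli$, is the technical heart of the argument.
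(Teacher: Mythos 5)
Your proposal is correct and is essentially the paper's own argument: the paper works with $E_\cli = I - F_\cli = (\Omega_\cli M \Omega_\cli)^\sharp M$, computes $\trace(E_\cli) = \operatorname{rank} E_\cli = |\mathcal C_\cli| - 1$, and bounds the smallest nonzero eigenvalue of $\Eave$ by the average $\trace(\Eave)/(m-1)$, which is exactly your trace-versus-average argument applied to $I - \Fave$ instead of $\Fave$. The bookkeeping points you flag (multiplicity one of the eigenvalue $1$ of $\Fave$, equivalently $\operatorname{rank}\Eave = m-1$, guaranteed by connectivity via Lemma~\ref{lem:eigenvaluesOfFBar}) are the same ones the paper relies on.
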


\begin{proof}
Let 
\begin{align}
E_\cli &:= (\Omega_\cli M \Omega_\cli)^\sharp M  \label{def:Ei} \\
\Eave &:= \expect{E_\cli}=\sum_{\cli=1}^\ell \rho_\cli E_\cli\label{def:barE}
\end{align}
It is clear that $\beta = 1 - \beta'$,
where $\beta'  = \min \{ |\lambda| \;|\; \lambda \in \lambda(\Eave), \lambda \ne 0\}$.
We have that
$$
\sum_{\lambda_j\in \lambda(\Eave)}\!\! \lambda_j = 
\trace(\Eave) =
\trace\left( \sum_{\cli=1}^\noclusters \rho_\cli E_\cli \right)=
\sum_{\cli=1}^\noclusters \rho_\cli \trace \left( E_\cli \right).
$$
Notice now that $E_\cli$ have eigenvalues zero or one and so 
\begin{align*}
\trace\left( E_\cli \right)
&=\text{rank } E_\cli= \dim \left(\Imag E_\cli\right)\\
&=\dim\left(\Imag (\Omega_\cli M \Omega_\cli)^\sharp\right)
=\dim\left(\Imag \Omega_\cli \right)\\
&=|\mathcal C_\cli|-1.
\end{align*}
This implies that
$$
\sum_{\lambda_j\in \lambda(\Eave)} \lambda_j
=
\left(\sum_{\cli=1}^\noclusters \rho_\cli|\mathcal C_\cli|\right)-1,
$$
and so
\begin{align*}
\beta'  &= \min \{ |\lambda| \;|\; \lambda \in \lambda(\Eave), \lambda \ne 0\}\\
&\le
\frac{\sum_{\lambda_j\in \lambda(\Eave)} \lambda_j}{\nocompensators - 1}
=
\frac{\left(\sum_{\cli=1}^\noclusters \rho_\cli|\mathcal C_\cli|\right)-1}{\nocompensators-1}.
\end{align*}
\end{proof}

\section{Optimal communication hypergraph for a radial distribution network}
\label{sec:optimalHypergraph}

In this section we present a special case in which the optimal convergence rate of Theorem \ref{thm:bestbeta} is indeed achieved via a specific choice of the clusters $\mathcal C_\cli$. 
To obtain this result we need to introduce the following assumption, 
which is commonly verified in many practical cases (including the vast majority of power distribution networks and the standard IEEE testbeds \cite{Kersting2001}). 

\begin{assumption}
\label{ass:tree}
The distribution network is radial, i.e. the corresponding graph $\graph$ is a tree. 
\end{assumption}

We start by providing some useful properties of the matrices $E_\cli$.

\begin{lemma}
The following properties hold true:
\begin{enumerate}[i)]
\item $\Imag E_\cli=\Imag\Omega_\cli$;
\item $E_\cli^2=E_\cli$;
\item $E_\cli x=x$ for all $x\in\Imag\Omega_\cli$.
\end{enumerate}
\label{lemma:propEi}
\end{lemma}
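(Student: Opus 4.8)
The plan is to treat part iii) as the heart of the lemma and to obtain parts i) and ii) as essentially free consequences of it. Throughout I would abbreviate $P := \Omega_\cli$ and $B := \Omega_\cli M \Omega_\cli$, and lean on three facts that are already available: $P$ is symmetric and idempotent, hence the orthogonal projector onto $\mathcal S_\cli = \Imag \Omega_\cli$; $M$ is symmetric, being a principal block of $\Re(\green) = \Re(\green)^T$; and, by the property iii) listed for the matrices $(\Omega_\cli M \Omega_\cli)^\sharp$, one has $\Imag B^\sharp = \Imag \Omega_\cli$ together with the factorization $B^\sharp = B^\sharp \Omega_\cli$.

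The first step I would carry out is the single algebraic identity $B^\sharp B = \Omega_\cli$. Since $B^\sharp B$ is always the orthogonal projector onto $\Imag B^\sharp$, and $\Imag B^\sharp = \Imag \Omega_\cli$, this projector coincides with $\Omega_\cli$ itself. With this identity in hand, part iii) reduces to a short chain: for any $x \in \Imag \Omega_\cli$ we have $\Omega_\cli x = x$, whence
\[
E_\cli x = B^\sharp M x = B^\sharp \Omega_\cli M x = B^\sharp \Omega_\cli M \Omega_\cli x = B^\sharp B x = \Omega_\cli x = x,
\]
where the second equality uses $B^\sharp = B^\sharp \Omega_\cli$, the third inserts $\Omega_\cli x = x$, and the last two use $\Omega_\cli M \Omega_\cli = B$ and $B^\sharp B = \Omega_\cli$. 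This establishes $E_\cli x = x$ on all of $\Imag \Omega_\cli$.

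Parts i) and ii) then follow without further computation. For i), the inclusion $\Imag E_\cli = \Imag(B^\sharp M) \subseteq \Imag B^\sharp = \Imag \Omega_\cli$ is immediate, and iii) gives the reverse inclusion since each $x \in \Imag \Omega_\cli$ equals $E_\cli x \in \Imag E_\cli$. For ii), any $E_\cli y$ lies in $\Imag E_\cli = \Imag \Omega_\cli$ by i), so applying iii) to it yields $E_\cli(E_\cli y) = E_\cli y$, that is $E_\cli^2 = E_\cli$. The one genuinely delicate point, and the step I would expect to be the main obstacle, is the identity $B^\sharp B = \Omega_\cli$: it rests on $\Imag B^\sharp = \Imag \Omega_\cli$, i.e. on $\Omega_\cli M \Omega_\cli$ being nondegenerate on $\mathcal S_\cli$, which is exactly the already-listed property iii) of $(\Omega_\cli M \Omega_\cli)^\sharp$. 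Once that is invoked the argument is purely formal and, notably, uses only the symmetry and the nondegeneracy of $M$ on $\mathcal S_\cli$, not the radial structure of Assumption~\ref{ass:tree}.
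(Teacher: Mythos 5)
Your proof is correct, but it inverts the paper's logical order and rests on a different key identity, so it is worth recording the comparison. The paper proves i) first, using the invertibility of $M$ to get $\Imag\left((\Omega_r M\Omega_r)^\sharp M\right)=\Imag (\Omega_r M\Omega_r)^\sharp=\Imag\Omega_r$; it proves ii) by expanding $E_r^2=(I-F_r)^2$ and invoking the idempotence $F_r^2=F_r$ already put on record in Section V; and only then deduces iii) from i) and ii), in exactly the mirror image of the way you deduce ii) from i) and iii). You instead make iii) the primitive statement, established through the identity $(\Omega_r M\Omega_r)^\sharp(\Omega_r M\Omega_r)=\Omega_r$, which holds because both sides are the (unique) orthogonal projector onto $\Imag(\Omega_r M\Omega_r)^\sharp=\Imag\Omega_r$; this is a clean and fully correct use of the Moore--Penrose properties together with the listed facts $\Omega_r^2=\Omega_r=\Omega_r^T$ and $(\Omega_r M\Omega_r)^\sharp=(\Omega_r M\Omega_r)^\sharp\Omega_r$. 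What your route buys is self-containedness and slightly greater generality: you do not need $F_r^2=F_r$ as an external input (you effectively re-derive it, since $E_r^2=E_r$ is equivalent to it), and you never use invertibility of the full matrix $M$ --- only its symmetry and its nondegeneracy on $\mathcal S_r$, as you note at the end. What the paper's order buys is brevity, since the projection property of $F_r$ has already been asserted in Section V. Both arguments ultimately lean on the same unproven listed properties of $(\Omega_r M\Omega_r)^\sharp$, so neither is more rigorous than the other at the foundations.
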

\begin{proof}
\begin{enumerate}[i)]
\item First observe that, since $M$ is invertible, then  $\Imag E_\cli=\Imag(\Omega_\cli M \Omega_\cli)^\sharp M=\Imag(\Omega_\cli M \Omega_\cli)^\sharp$. 
Finally from the properties of the matrices $(\Omega_\cli M \Omega_\cli)^\sharp$, 
we obtain that $\Imag E_\cli=\Imag\Omega_\cli$.
\item First observe that $E_\cli^2=(I-F_\cli)^2=I-2F_\cli+F_\cli^2$. Now, since $F_\cli^2=F_\cli$ we obtain that $I-2F_\cli+F_\cli^2=I-F_\cli=E_\cli$.
\item Let $x\in\Imag\Omega_\cli=\Imag E_\cli$. Then $x=E_\cli z$ for some vector $z$ and so $E_\cli x=E_\cli^2 z=E_\cli z=x$.
\end{enumerate}
\end{proof}

Let us now consider for any pair of nodes $h,k$ the shortest path  $\mathcal P_{hk} \subseteq \edges$
(where, we recall, a path is a sequence of consecutive edges) connecting $h,k$.
Define moreover
$$
\mathcal P_{\mathcal C_\cli}:=\bigcup_{h,k\in\mathcal C_\cli}\mathcal P_{hk}.
$$

\begin{lemma}
Let Assumption \ref{ass:tree} hold true. Then
\begin{enumerate}[i)]
\item $(\1_h-\1_k)^TM(\1_{h'}-\1_{k'})=\sum_{e\in\mathcal P_{hk}\cap \mathcal P_{h'k'}}\Re(z_e)$.
\item $E_\cli (\1_h-\1_k)=0$ if $\mathcal P_{\mathcal C_\cli}\cap \mathcal P_{hk}=\emptyset$.
\end{enumerate}
\label{lemma:tree_M}
\end{lemma}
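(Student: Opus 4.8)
The plan is to exploit the explicit characterization of the real part of the Green matrix $M$ in terms of effective resistances, which in a tree collapses to a simple edge-counting formula. First I would establish part i). The key identity is \eqref{eq:effectiveResistance}, which gives $(\1_v - \1_w)^T \green (\1_v - \1_w) = Z^\text{eff}_{vw}$; taking real parts and recalling the block definition \eqref{eq:blockpartitioning} of $M$ as a submatrix of $\Re(\green)$, this yields $[R^\text{eff}]_{hk} = (\1_h - \1_k)^T M (\1_h - \1_k)$. Under Assumption \ref{ass:tree} the graph is a tree, so between any two nodes there is a unique path, and the effective resistance is just the sum of the edge resistances along that path: $(\1_h - \1_k)^T M (\1_h - \1_k) = \sum_{e \in \mathcal P_{hk}} \Re(z_e)$. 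To obtain the general (off-diagonal) bilinear form $(\1_h - \1_k)^T M (\1_{h'} - \1_{k'})$, I would polarize this quadratic identity: writing the bilinear form as a combination of quadratic terms via the polarization identity, each quadratic term becomes a sum of edge resistances over the corresponding tree path. The cross terms then combine so that only edges lying in the \emph{intersection} $\mathcal P_{hk} \cap \mathcal P_{h'k'}$ survive, giving the claimed formula $\sum_{e \in \mathcal P_{hk} \cap \mathcal P_{h'k'}} \Re(z_e)$. The geometric fact making this work is that in a tree the overlap of two paths is itself a (connected) subpath, so the inclusion–exclusion bookkeeping is clean.

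For part ii), I would use the structure of $E_\cli = (\Omega_\cli M \Omega_\cli)^\sharp M$ together with part i). The goal is to show $E_\cli(\1_h - \1_k) = 0$ whenever $\mathcal P_{\mathcal C_\cli} \cap \mathcal P_{hk} = \emptyset$. By property iii) of Lemma \ref{lemma:propEi}, $E_\cli$ acts as the identity on $\Imag \Omega_\cli = \mathcal S_\cli$ and, being idempotent with $\Imag E_\cli = \Imag \Omega_\cli$, it is determined once we identify its kernel. Since $\ker E_\cli = \ker (\Omega_\cli M \Omega_\cli)^\sharp M$ and $M$ is invertible, $(\1_h - \1_k) \in \ker E_\cli$ is equivalent to $M(\1_h - \1_k) \in \ker (\Omega_\cli M \Omega_\cli)^\sharp = \ker \Omega_\cli$ (using property iii of the $(\Omega_\cli M \Omega_\cli)^\sharp$ matrices). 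I would therefore show that $\Omega_\cli M (\1_h - \1_k) = 0$. Expanding $\Omega_\cli$ as $\frac{1}{2|\mathcal C_\cli|}\sum_{a,b \in \mathcal C_\cli}(\1_a - \1_b)(\1_a - \1_b)^T$, it suffices to verify that $(\1_a - \1_b)^T M (\1_h - \1_k) = 0$ for every pair $a,b \in \mathcal C_\cli$. By part i) this inner product equals $\sum_{e \in \mathcal P_{ab} \cap \mathcal P_{hk}} \Re(z_e)$, and since $\mathcal P_{ab} \subseteq \mathcal P_{\mathcal C_\cli}$ while $\mathcal P_{\mathcal C_\cli} \cap \mathcal P_{hk} = \emptyset$, the intersection $\mathcal P_{ab} \cap \mathcal P_{hk}$ is empty and the sum vanishes.

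The main obstacle I anticipate is the careful justification of the path-intersection formula in part i): verifying via polarization that the bilinear form over two distinct node pairs reduces exactly to summing resistances over the shared edges, rather than over some larger or smaller edge set. This requires a clean argument that, in a tree, the signed contributions of edges outside the common subpath cancel, which depends on orienting the paths consistently and tracking signs through the polarization. Once part i) is secured, part ii) follows by the essentially algebraic reduction to $\ker \Omega_\cli$ sketched above, so I regard the edge-counting identity as the crux of the lemma.
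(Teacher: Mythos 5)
Your proposal is correct. Part ii) follows the paper's own argument essentially verbatim: you show $\Omega_r M(\1_h-\1_k)=0$ by expanding $\Omega_r$ into rank-one terms $(\1_a-\1_b)(\1_a-\1_b)^T$ with $a,b\in\mathcal C_r$, kill each term with part i), and conclude via $E_r=(\Omega_r M\Omega_r)^\sharp\Omega_r M$. For part i), however, you take a genuinely different route. The paper argues circuit-theoretically: it reads $\1_{h'}-\1_{k'}$ as a unit current injected at one node and extracted at the other, so that $(\1_h-\1_k)^T\green(\1_{h'}-\1_{k'})=u_h-u_k$ by \eqref{eq:det_iu_U0}; in a tree this current flows only along $\mathcal P_{h'k'}$, the potential is constant on every branch hanging off that path, and the drop $u_h-u_k$ accumulates exactly over the common subpath. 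You instead polarize the effective-resistance identity \eqref{eq:effectiveResistance}, and the bookkeeping you flag as the main obstacle does close: with $R_{ab}:=(\1_a-\1_b)^TM(\1_a-\1_b)$ one has $(\1_h-\1_k)^TM(\1_{h'}-\1_{k'})=\tfrac12\left(R_{hk'}+R_{kh'}-R_{hh'}-R_{kk'}\right)$, and for each tree edge $e$, whose removal splits the nodes into a set $A_e$ and its complement, setting $a(v)=1$ if $v\in A_e$ and $0$ otherwise gives a per-edge contribution $\left(a(h)-a(k)\right)\left(a(h')-a(k')\right)$, which is $\pm1$ precisely when $e\in\mathcal P_{hk}\cap\mathcal P_{h'k'}$ and $0$ otherwise. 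The one caveat — shared with the paper's own statement and proof — is the sign: the bilinear form equals $\pm\sum_{e\in\mathcal P_{hk}\cap\mathcal P_{h'k'}}\Re(z_e)$ depending on the relative orientation of the two paths (swapping $h'$ and $k'$ flips it), but since only the vanishing case feeds into part ii), this is harmless. The paper's argument buys physical intuition and sidesteps the combinatorics; yours is self-contained linear algebra on top of the effective-impedance formula and needs no reasoning about current distributions.
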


\begin{proof}
\begin{enumerate}[i)]
\item Observe that $(\1_h-\1_k)^TM(\1_{h'}-\1_{k'})=(\1_h-\1_k)^TX(\1_{h'}-\1_{k'})$,
where with some abuse of notation we have denoted with the same symbol $\1_h$ the vector in $\realnumbers^\nocompensators$ and the corresponding vector in $\realnumbers^\nonodes$.
Notice that, if we have a current $\cli=\1_{h'}-\1_{k'}$ in the network, 
then according to \eqref{eq:det_iu_U0} we get a voltage vector
$$
u= \green i+\1 U_N= \green (\1_{h'}-\1_{k'})+\1 U_N
$$
and so $(\1_h-\1_k)^T \green (\1_{h'}-\1_{k'})=u_h-u_k$. Observing the following figure

\begin{center}
\bigskip
\includegraphics[scale=0.6]{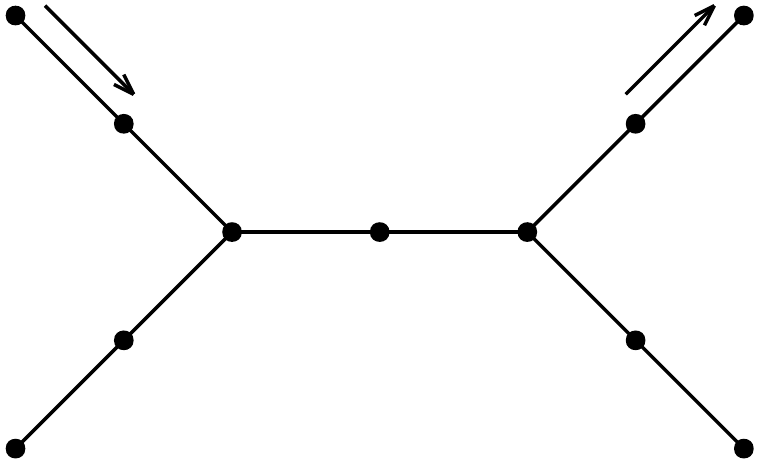}
\put(-138,76){\small $h'$}
\put(3,76){\small $k'$}
\put(3,0){\small $k$}
\put(-140,0){\small $h$}
\put(-93,46){\small $h''$}
\put(-44,46){\small $k''$}
\end{center}

we can argue that $u_h-u_k=u_{h''}-u_{k''}=\sum_{e\in\mathcal P_{h''k''}}z_e$ 
and so
$(\1_h-\1_k)^T M (\1_{h'}-\1_{k'})=\sum_{e\in\mathcal P_{h''k''}}\Re(z_e)$.
\item If $\mathcal P_{\mathcal C_\cli}\cap \mathcal P_{hk}=\emptyset$, then
\begin{multline*}
$$\Omega_\cli M(\1_h-\1_k)
=\\
\frac{1}{2|\mathcal C_\cli|}\sum_{h',k'\in\mathcal C_\cli}(\1_{h'}-\1_{k'})(\1_{h'}-\1_{k'})^TM(\1_{h'}-\1_{k'})=0
$$
\end{multline*}
since $\mathcal P_{hk}\cap \mathcal P_{h'k'}=\emptyset$ for all $h',k' \in \compensators_\cli$. 
Thus $E_\cli (\1_h-\1_k)=(\Omega_\cli M \Omega_\cli)^\sharp\Omega_\cli M(\1_h-\1_k)=0$.
\end{enumerate}
\end{proof}

We give now the key definition to characterize, together with connectivity, the optimality of the communication hypergraph $\mathcal H$.

\begin{definition}
\label{def:minimality}
The hyperedges $\{\mathcal C_\cli, \,\cli = 1,\ldots,\ell\}$ of the hypergraph $\mathcal H$ are \emph{edge-disjoint}\footnotemark\ if for any $\cli,\cli'$ such that $\cli\ne \cli'$,
we have that
$$
\mathcal P_{\mathcal C_\cli} \cap \mathcal P_{\mathcal C_{\cli'}}=\emptyset.
$$
\end{definition}

\footnotetext{This definition of \emph{edge-disjoint} for this specific scenario resembles a similar concept for routing problems in data networks, as for example \cite{Srinivasan_1997_edgedisjoint}.}

In order to prove the optimality of having edge-disjoint clusters,
we need the following lemma.

\begin{lemma}
Assume that Assumptions \ref{ass:tree} holds true and that the clusters $\compensators_\cli$ are edge-disjoint.
Then $E_\cli E_{\cli'}=0$ for all $\cli \ne \cli'$.
\label{lemma:orth}
\end{lemma}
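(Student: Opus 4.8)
The goal is to show $E_\cli E_{\cli'} = 0$ for edge-disjoint clusters $\cli \ne \cli'$. Since $\Imag E_{\cli'} = \Imag \Omega_{\cli'}$ by part i) of Lemma~\ref{lemma:propEi}, every column of $E_{\cli'}$ lies in $\Imag \Omega_{\cli'}$. The plan is therefore to show that $E_\cli$ annihilates the entire image of $\Omega_{\cli'}$, i.e. $E_\cli y = 0$ for all $y \in \Imag \Omega_{\cli'}$; composing this with $E_{\cli'}$ then gives $E_\cli E_{\cli'} = 0$ immediately. Because $\Imag\Omega_{\cli'} = \mathcal S_{\cli'}$ is spanned by differences $\1_h - \1_k$ with $h,k \in \compensators_{\cli'}$, it suffices to verify $E_\cli(\1_h - \1_k) = 0$ for each such pair.

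\textbf{Key steps.} First I would recall from the definition of $\Omega_{\cli'}$ that $\Imag\Omega_{\cli'}$ is spanned by the vectors $\1_h - \1_k$ for $h,k \in \compensators_{\cli'}$; this reduces the problem to these generators. Next, for any such pair, the relevant path $\mathcal P_{hk}$ is contained in $\mathcal P_{\compensators_{\cli'}} = \bigcup_{h,k\in\compensators_{\cli'}}\mathcal P_{hk}$ by construction. The edge-disjointness hypothesis (Definition~\ref{def:minimality}) then gives
$$
\mathcal P_{\compensators_\cli} \cap \mathcal P_{hk} \subseteq \mathcal P_{\compensators_\cli} \cap \mathcal P_{\compensators_{\cli'}} = \emptyset.
$$
This is exactly the hypothesis of part ii) of Lemma~\ref{lemma:tree_M}, which yields $E_\cli(\1_h - \1_k) = 0$. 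By linearity, $E_\cli y = 0$ for every $y \in \Imag\Omega_{\cli'} = \Imag E_{\cli'}$. Finally, writing any vector as $E_{\cli'} z$, we get $E_\cli E_{\cli'} z = E_\cli(E_{\cli'} z) = 0$ since $E_{\cli'} z \in \Imag E_{\cli'}$, so $E_\cli E_{\cli'} = 0$.

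\textbf{Main obstacle.} The proof is essentially a clean chaining of the two preceding lemmas, so I do not expect a serious analytic obstacle; the burden is entirely in getting the set-theoretic inclusion of paths right. The one point demanding care is the inclusion $\mathcal P_{hk} \subseteq \mathcal P_{\compensators_{\cli'}}$ combined with the correct direction of the edge-disjointness condition, ensuring that the emptiness of $\mathcal P_{\compensators_\cli}\cap \mathcal P_{\compensators_{\cli'}}$ propagates down to the emptiness of $\mathcal P_{\compensators_\cli}\cap \mathcal P_{hk}$ for the individual generator $\1_h-\1_k$. Once that inclusion is set up so that Lemma~\ref{lemma:tree_M}~ii) applies to each generator, the rest is routine linearity and the image characterization of Lemma~\ref{lemma:propEi}~i). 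The tree hypothesis (Assumption~\ref{ass:tree}) enters only implicitly, through its role in Lemma~\ref{lemma:tree_M}, so no additional use of it is required here.
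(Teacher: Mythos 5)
Your proposal is correct and follows essentially the same route as the paper: both reduce via $\Imag E_{\cli'}=\Imag\Omega_{\cli'}$ to showing $E_\cli$ kills the generators $\1_h-\1_k$, $h,k\in\compensators_{\cli'}$, and then invoke part ii) of Lemma~\ref{lemma:tree_M} together with the inclusion $\mathcal P_{hk}\subseteq\mathcal P_{\compensators_{\cli'}}$ and edge-disjointness. The only cosmetic difference is that the paper expands $\Omega_{\cli'}$ explicitly as a sum of rank-one terms $(\1_h-\1_k)(\1_h-\1_k)^T$ rather than appealing to the spanning set, which amounts to the same argument.
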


\begin{proof} Let $x$ be any vector in $\realnumbers^\nocompensators$. 
Notice that, since $\Imag E_{\cli'}=\Imag \Omega_{\cli'}$, then there exists $z\in\mathbb R^m$ such that
$$
E_\cli E_{\cli'} x = E_\cli \Omega_{\cli'} z
=
\frac{1}{2|\mathcal C_\cli|}\sum_{h,k\in\mathcal C_{\cli'}}
\!\!\!\!\!E_\cli(\1_h-\1_k)(\1_h-\1_k)^Tz=0
$$
where the last equality follows from the fact that 
$\mathcal P_{\mathcal C_\cli}\cap \mathcal P_{hk}=\emptyset$ 
for all $h,k\in\mathcal C_{\cli'}$.
\end{proof}

From the previous lemma we can argue that every vector in $\Imag\Omega_\cli$ is eigenvector of 
$\Eave = \sum_{\cli=1}^\ell \rho_\cli E_\cli$ associated with the eigenvalue $\rho_\cli$. 
On the other hand, recall that the hypergraph connectivity implies that $\ker\1^T=\sum_{\cli=1}^\ell\Imag\Omega_\cli=\sum_{\cli=1}^\noclusters \Imag E_\cli$ and hence we have that,
besides the zero eigenvalue, the eigenvalues of  $\Eave$ are exactly $\rho_1,\ldots,\rho_\noclusters$
and so 
$$
\beta = 1 - \min_{\cli=1,\ldots,\ell}\{\rho_\cli\}.
$$
The bound $\beta$ is minimized by taking $\rho_\cli=1/\ell$ for all $\cli$, obtaining
$$
\beta=1-\frac{1}{\ell}.
$$
Notice finally that $E_\cli E_{\cli'}=0$ for all $\cli \ne \cli'$ 
implies that $\sum_{\cli=1}^\noclusters \Imag E_\cli$ 
is a direct sum and then
$$
\nocompensators-1 = \sum_{\cli=1}^\noclusters \dim \Imag E_\cli
= \sum_{\cli=1}^\noclusters(|\mathcal C_\cli|-1)
=\sum_{\cli=1}^\noclusters |\mathcal C_\cli| - \noclusters.
$$
If all the sets $\mathcal C_\cli$ have the same cardinality $c$, then $\ell c = m + \ell - 1$,
and so
$$
\beta = 1-\frac{1}{\ell} = 1-\frac{c-1}{m-1}
$$
which, according to Theorem \ref{thm:bestbeta}, shows the optimality of the hypergraph.

%

\section{Simulations}
\label{sec:simulations}

\begin{figure}[tb]
\centering
\includegraphics[scale=0.5]{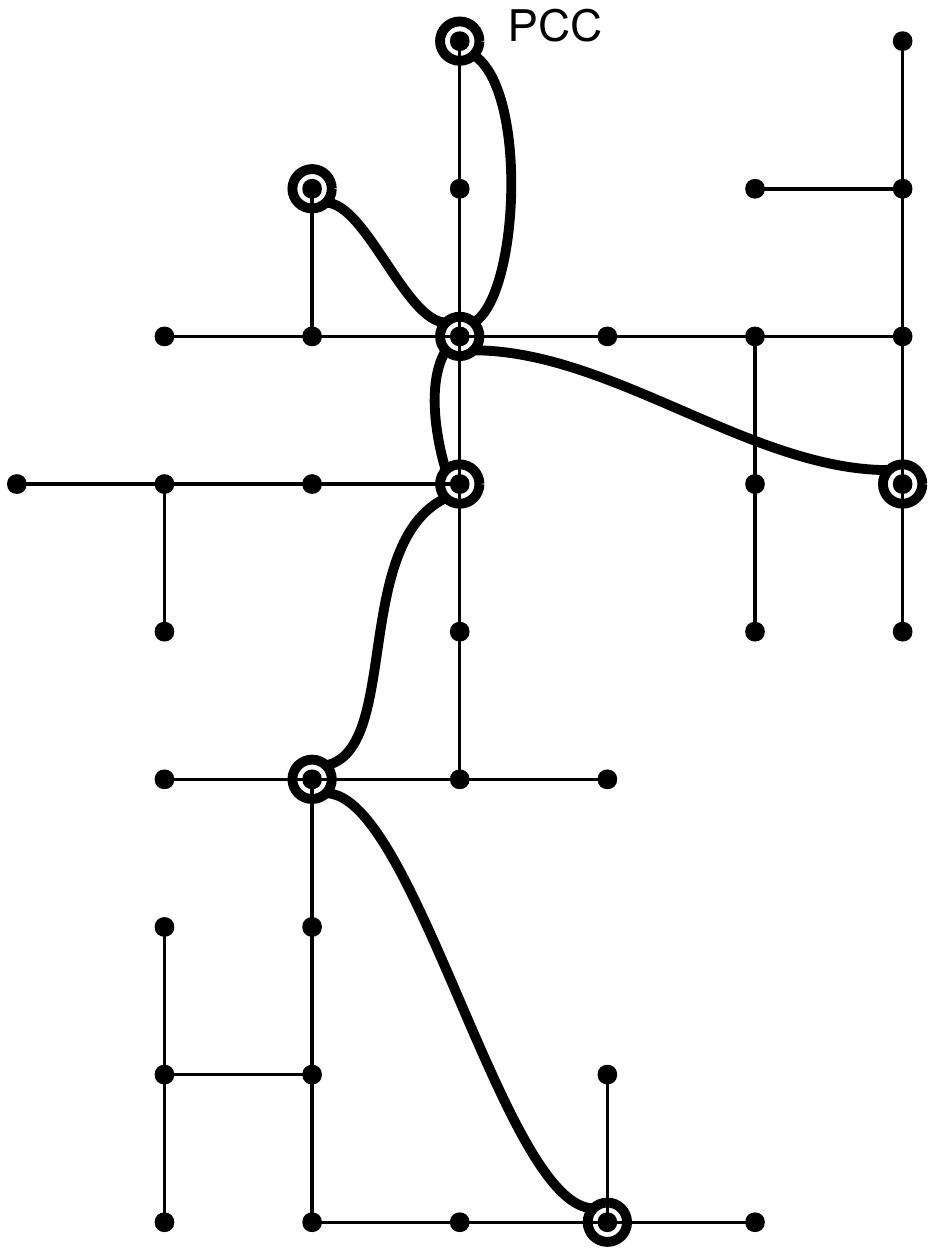}
\caption{Schematic representation of the IEEE37 testbed. 
Circled nodes represent microgenerators taking part to the distributed reactive power compensation.
The thick curved lines represent clusters of size $c=2$ (i.e. edges of $\mathcal H$)
connecting pair of compensators that can communicate and coordinate their behavior.}
\label{fig:ieee37}
\end{figure}

In this section we present numerical simulations to validate 
both the model presented in Section~\ref{sec:problem_formulation},
and the randomized algorithm proposed in Section~\ref{sec:randomizedAlgorithm}.

To do so, we considered a 4.8 kV testbed inspired from the standard testbed IEEE 37 \cite{Kersting2001},
which is an actual portion of power distribution network located in California.
We however assumed that load are balanced, and therefore all currents and voltages can be described in a single-phase phasorial notation.

As shown in Figure \ref{fig:ieee37}, 
some of the nodes are microgenerators
and are capable of injecting a commanded amount of reactive power.
Notice that the PCC (node $\PCC$) belongs to the set of compensators.
This means that the microgrid is allowed to change the amount of reactive power 
gathered from the transmission grid, if this reduces the power distribution losses.
The nodes which are not compensators, are a blend of constant-power, constant-current, and constant-impedance loads,
with a total power demand of almost 2 MW of active power and 1 MVAR of reactive power
(see \cite{Kersting2001} for the testbed data).

The network topology is radial. While this is not a necessary condition for the algorithm, this is typical in basically all power distribution networks. Moreover, the choice of a radial grid allows the validation of the optimality result of Section \ref{sec:optimalHypergraph} for the clustering of the compensators.

The length of the power lines range from a minimum of 25 meters to a maximum of almost 600 meters.
The impedance of the power lines differs from edge to edge 
(for example, resistance ranges from 0.182 $\Omega$/km to 1.305 $\Omega$/km).
However, the inductance/resistance ratio exhibits a smaller variation,
ranging from $\angle z_e = 0.47$ to $\angle z_e = 0.59$.
This justifies Assumption \ref{ass:theta}, in which we claimed that $\angle z_e$ can be considered constant across the network. The effects of this approximation will be commented later.

Without any distributed reactive power compensation,
distribution power losses amount
to 61.6~kW, 3.11\% of the delivered active power.

Given this setup, we first estimate the quality of the linear approximated model 
proposed in Section~\ref{sec:problem_formulation}.
As shown in Figure~\ref{fig:voltages}, 
the approximation error results to be negligible.

\begin{figure}[tb]
\raggedleft
\includegraphics[scale=0.94]{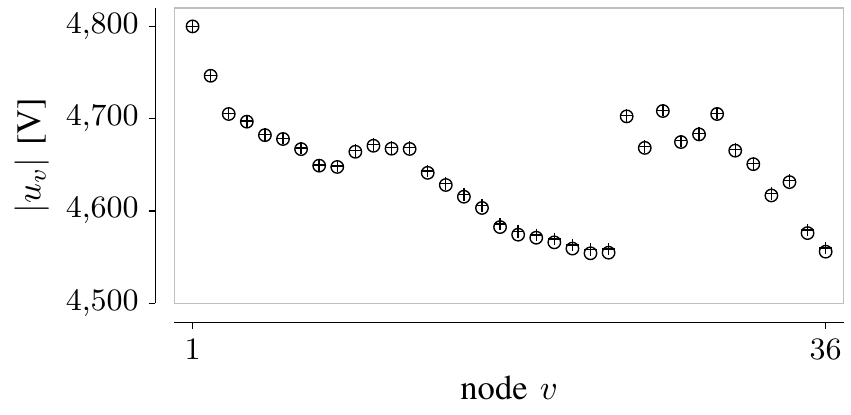}$\ $\\
\includegraphics[scale=0.94]{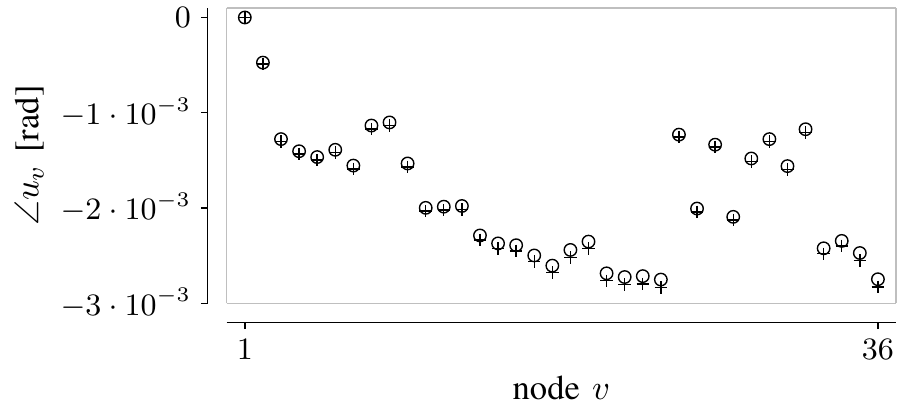}$\ $ 
\caption{Comparison between the network state (node voltages) 
computed via the exact model induced by \eqref{eq:KCL}, \eqref{eq:KVL}, 
\eqref{eq:ZIPModel}, and \eqref{eq:PCCmodel} ($\circ$), 
and the approximate model proposed in Proposition \ref{proposition:taylor_expansion_complex} ($+$).}
\label{fig:voltages}
\end{figure}

We then simulated the behavior of the algorithm proposed in Section \ref{sec:randomizedAlgorithm}.
We considered the two following clustering choices:
\begin{itemize}
\item \emph{edge-disjoint gossip:} motivated by the result stated in Section \ref{sec:optimalHypergraph},
we enabled pairwise communication between compensator in a way that guarantees that the clusters (pairs) are edge-disjoint; the resulting hypergraph $\mathcal H$ is represented as a thick line in Figure \ref{fig:ieee37};
\item \emph{star topology:} clusters are in the form $\mathcal C_\cli = \{\PCC, v\}$ for all $v \in \mathcal C \backslash \{\PCC\}$; the reason of this choice is that, as $\PCC$ is the PCC, 
the constraint $\1^T q = 0$ is inherently satisfied:
whatever variation in the injected reactive power is applied by $v$, 
it is automatically compensated by a variation in the demand of reactive power from the transmission grid via the PCC.
\end{itemize}

\begin{figure}[tb]
\raggedleft
\includegraphics[scale=0.94]{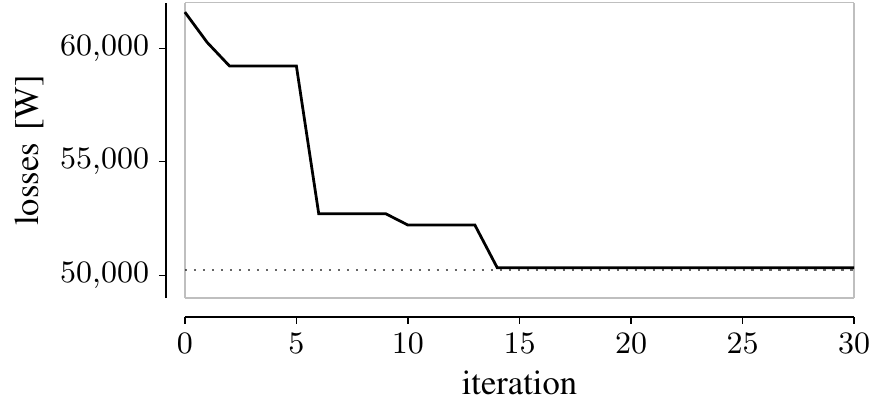}$\ $%
\caption{Power distribution losses resulting by the execution of the proposed algorithm.
A edge-disjoint hypergraph, yielding optimal convergence speed, has been adopted.
The dashed line represent the minimum losses that can be achieved via centralized numerical optimization.}
\label{fig:singlerun}
\end{figure}

In Figure \ref{fig:singlerun} we plotted the result of a single execution of the algorithm in the \emph{edge-disjoint gossip} case. 
One can see that the algorithm converges quite fast, reducing losses to a minimum that is extremely close to the best achievable solution. The results achieved by the proposed algorithm on this testbed are summarized in the following table.

\begin{center}
\begin{tabular}{rr}
\hline 
Losses before optimization & 61589 W \\
Fraction of delivered power & 3.11 \% \\
\hline
Losses after optimization & 50338 W \\
Fraction of delivered power & 2.55 \% \\
losses reduction	& 18.27 \% \\
\hline
Minimum losses $J^\text{opt}$ & 50253 W \\
Fraction of delivered power & 2.54 \% \\
losses reduction & 18.41 \% \\
\hline
\end{tabular}
\end{center}

The minimum losses $J^\text{opt}$ 
is the solution of the original 
optimization problem \eqref{eq:nl_optimization_problem}, 
has been obtained by a centralized numerical solver, 
and represent the minimum losses that can be achieved by properly choosing the amount of reactive power injected by the compensators (and retrieved from the PCC).
The difference between this minimum and the minimum achieved by the algorithm proposed in this paper 
is partly due to the approximation that we introduced when we modeled the microgrid (assuming large nominal voltage $U_N$),
and partly due to the assumption that $\theta$ is constant across the network.
Further simulative investigation on this testbed 
showed that the effect of this last assumption is largely predominant, 
compared to the effects of the approximation in the microgrid state equations.
Still, the combined effect of these two terms is minimal, in practice.

In Figure~\ref{fig:convergence} we compared the behavior of the two considered clustering strategies.
One can notice different things. 
First, both strategies converge to the same minimum, which is slightly larger than the minimum losses that could be achieved by solving the original, nonconvex optimization problem. As expected, the clustering choice do not affect the steady state performance of the algorithm.
Second, the performance of the edge-disjoint gossip algorithm results indeed to be better than the star topology in the long-time regime, as the analysis in Section \ref{sec:optimalHypergraph} suggests, and its slope corresponds to the fastest achievable
rate of convergence,
as predicted by Theorem \ref{thm:bestbeta}.
Third, one can see that, while the performance metric that we adopted 
is meaningful for describing the long-time regime,
it does not describe, in general, the initial stage (or short-time behavior).
Indeed, in the initial transient, the full dynamics of the system \eqref{eq:updateStep} contribute to the algorithm behavior.
This is evident in the star-topology strategy.
In the edge-disjoint case, on the other hand, all the eigenvalues of 
the systems \eqref{eq:updateStep} result to be the same, and thus the adopted performance metric (the slowest eigenvalue) fully describes the algorithm evolution.
This fact suggests that a different metric can possibly be adopted, 
in order to better characterize the two regimes.
Choosing the most appropriate metric requires a preliminary derivation of a dynamic model for the variation of reactive power demands of the loads in time.
In the case of time-varying loads, different strategies will corresponds to different steady-state behaviors of the algorithm.
We expect that an $L_2-$type metric will be needed for the characterization of the performance of different clustering choices, while the slowest eigenvalue will not be informative enough in this sense.
A preliminary study in this sense has been proposed in \cite{Bolognani2012a}.

\begin{figure}[tb]
\raggedleft
\includegraphics[scale=0.94]{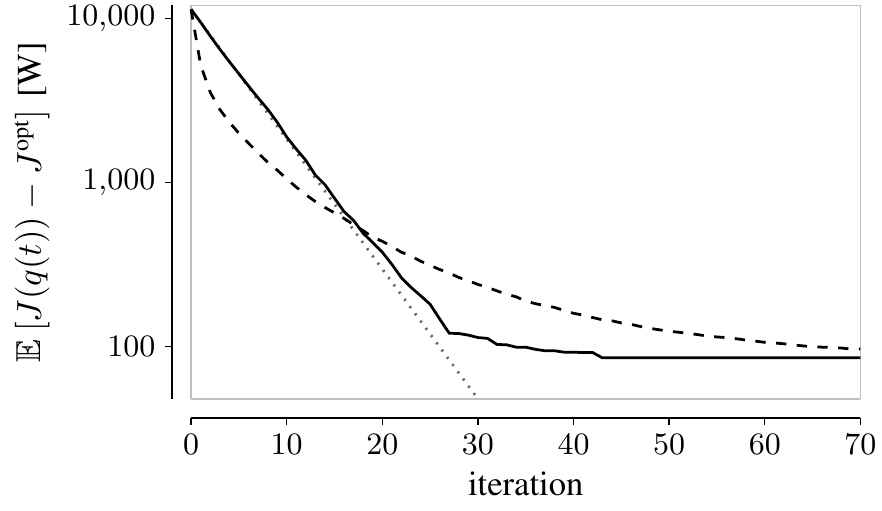}$\ $
\caption{Simulation of the expected behavior of the algorithm for different clustering strategies.
The difference between power losses at each iteration of the algorithm 
(averaged over $1000$ realizations)
and the minimum possible losses $J^\text{opt}$,
has been plotted for 
two different clustering choices: \emph{edge-disjoint gossip} (solid line) and \emph{star topology} (dashed). 
The slope represented via a dotted line represent the best possible rate of convergence that the algorithm can achieve,
according to Theorem \ref{thm:bestbeta}.}
\label{fig:convergence}
\end{figure}

\section{Conclusions}

The proposed model for the problem of optimal 
reactive power compensation in smart microgrids exhibits two main features. 
First, it can be casted into the framework of quadratic optimization, 
for which robust solvers are available and the performance analysis becomes tractable; 
second, it shows how the physics of the system can be exploited to design a distributed algorithm 
for the problem.

On the basis of the proposed model, we have been able to design a distributed, leaderless and randomized algorithm
for the solution of the optimization problem, requiring only local communication and local knowledge of the network topology.

We then proposed a metric for the performance of the algorithm, for which we are able to provide a bound on the best 
achievable performances. We are also able to tell which clustering choice is capable of giving the 
optimal performances. It is interesting that the optimal strategy requires short-range communications: 
this is a notable consequence of the presence of an underlying physical system, considered that consensus algorithm (which share many features with the proposed algorithm) benefit from long-range communication that shorten the graph diameter. 

At the same time, this result is also motivating from the technological point of view, 
because achieving cooperation of inverters that are far apart in the microgrid
presents a number of issues
(e.g. time synchronization, limited range of power line communication technologies).

Motivated by the final remarks of Section \ref{sec:simulations}, 
we plan to add two elements to the picture in the next future:
the network dynamics (both the dynamic behavior of the power demands, 
and the electrical response of the system when actuated), 
and the presence of operational constraints for the compensators, 
whose capability of injecting reactive power is limited and possibly time varying.




\appendices

\section{Proof of Proposition \ref{proposition:taylor_expansion_complex}}
\label{app:taylor}

We first introduce the new variable $\epsilon:=1/U_N$. 
We also assume, with no loss of generality, that the phase of the voltage at the PCC is $\phi = 0$.
This way we can see the currents $i$ and the voltages $u$ as functions $i(\epsilon),u(\epsilon)$ of $\epsilon$, determined by the equations
\begin{equation}
\begin{cases}
u=\green  i+ \epsilon^{-1}\1\\
\1^T i = 0 \\
u_v\bar i_v = s_v\left|\epsilon u_v\right|^{\eta_v},\quad \forall v \in \nodes \backslash \{ \PCC \}\\
\end{cases}
\label{eq:det_iu_eps}
\end{equation}
We construct the Taylor approximation of $u(\epsilon)$ and $i(\epsilon)$ around $\epsilon=0$.

Let us define 
\begin{equation}
\begin{split}
\delta_v(\epsilon) &:=  
 i_v(\epsilon)\epsilon^{-1}-\bar s_v\\
\lambda_v(\epsilon) &:=  
 u_v(\epsilon)\epsilon^{-1} - \epsilon^{-2} - [\green \bar s]_v 
\end{split}
\label{eq:deltalambda}
\end{equation}
and substitute them into (\ref{eq:det_iu_eps}).
We get a system of equations in $\delta,\lambda,\epsilon$ which can be written as
$$
\begin{cases}
G_v(\delta,\lambda,\epsilon)=0,\quad \forall v \in \nodes\\
F_v(\delta,\lambda,\epsilon)=0,\quad \forall v \in \nodes 
\end{cases}
$$
where
\begin{flalign*}
G_0(\delta,\lambda,\epsilon)&:=\lambda_0&\\
F_0(\delta,\lambda,\epsilon)&:=\sum_{w\in\nodes}\delta_w&
\end{flalign*}
and, for all $v \ne \PCC$, 
\begin{flalign*}
G_v(\delta,\lambda,\epsilon)&:=\lambda_v-\sum_{w\in\nodes}[\green]_{vw} \delta_w,&\\ 
F_v(\delta,\lambda,\epsilon)&:= 
\left(
1+\epsilon^2\lambda_v+\epsilon^2\sum_{w\in\nodes}[\green]_{vw}\bar s_w
\right)
(s_v+\bar\delta_v) \\
&\phantom{:=}- s_v
\left|
1+\epsilon^2\lambda_v+\epsilon^2\sum_{w\in\nodes}[\green]_{vw}\bar s_w
\right|^{\eta_v}.&
\end{flalign*}

We have $2n$ complex equations in $2n+1$ complex variables. 

We interpret now the complex numbers as vectors in $\realnumbers^2$ and 
$G_v,F_v$  as functions from $\realnumbers^{2n}\times \realnumbers^{2n}\times \realnumbers^2$ to $\realnumbers^{2}$.

Observe that 
$G_v(\delta,\lambda,\epsilon)_{|\delta=0, \lambda=0,\epsilon=0}=0$
and 
$F_v(\delta,\lambda,\epsilon)_{|\delta=0, \lambda=0,\epsilon=0}=0$ 
for all $v\in\nodes$.
By the implicit function theorem, if the matrix
$$\begin{bmatrix}
\left(\displaystyle\frac{\partial G_v}{\partial \delta_w}\right)_{v,w\in\nodes}
&
\left(\displaystyle\frac{\partial G_v}{\partial \lambda_w}\right)_{v,w\in\nodes}
\\ \\
\left(\displaystyle\frac{\partial F_v}{\partial \delta_w}\right)_{v,w\in\nodes}
&
\left(\displaystyle\frac{\partial F_v}{\partial \lambda_w}\right)_{v,w\in\nodes}
\end{bmatrix}
$$ 
evaluated in $\delta=0, \lambda=0, \epsilon=0$ is invertible,
then $\delta(\epsilon),\lambda(\epsilon)$ have continuous derivatives in $\epsilon=0$.

In order to determine this matrix we need to introduce the following notation. If $a\in\mathbb C$, and $a=a^R+ja^I$ with $a^R,a^I\in\mathbb R$, then we define
$$\langle a\rangle:=\left[\begin{array}{cc} a^R&-a^I\\a^I&a^R\end{array}\right]$$
With this notation observe that, if $a,x\in\mathbb C$ and we consider those complex numbers as  vectors in $\mathbb R^2$ and if we consider the function $f(x):=ax$ as a function from $\mathbb R^2$ to $\mathbb R^{2}$, then we have that $\partial f/\partial x=\langle a\rangle$. Notice moreover that the the function $g(x):=\bar x$ has 
$$\frac{\partial g}{\partial x}=N:=\left[\begin{array}{cc} 1&0\\0&-1\end{array}\right]$$
From these observations we can argue that
\begin{align*}
\frac{\partial G_0}{\partial \delta_w}=0,
& \qquad\qquad
\frac{\partial G_0}{\partial \lambda_w}=\begin{cases}
I\quad\text{if $w=0$}\\
0\quad\text{if $w\not =0$}
\end{cases}
\\
\frac{\partial F_0}{\partial \delta_w}=I,
& \qquad\qquad
\frac{\partial F_0}{\partial \lambda_w}=0
\end{align*}
Instead for all $v\not=0$ we have that
$$\frac{\partial G_v}{\partial \lambda_w}=\begin{cases}
I\quad\text{if $w=v$}\\
0\quad\text{if $w\not =v$}
\end{cases}
$$
while
$$
\frac{\partial G_v}{\partial \delta_w}=\langle -\green_{vw}\rangle\qquad \forall w\in\nodes.
$$
Observe finally that, if $w\not =v$, then $\partial F_v/\partial \lambda_w=\partial F_v/\partial \delta_w=0$ and that
$$
\frac{\partial F_v}{\partial \delta_v}=
\left\langle
1+\epsilon^2\lambda_v+\epsilon^2\sum_{w\in\nodes}[\green]_{vw}\bar s_w 
\right\rangle N
$$
which evaluated in $\epsilon=0$ yields the matrix $N$.
On the other hand,
$$
\frac{\partial F_v}{\partial \lambda_v}=
\langle \epsilon^2 (s_v+\bar\delta_v) \rangle
- s_v\frac{\partial \left|1+\epsilon^2\lambda_v+\epsilon^2\sum_{w\in\nodes}[\green]_{vw}\bar s_w\right|^{\eta_v}}{\partial \lambda_v}
$$
where $s_v$ is a $2$-dimensional column vector and 
$\frac{\partial \left|1+\lambda_v\right|^{\eta_v}}{\partial \lambda_v}$ is a $2$-dimensional row vector.
With some easy computations it can be seen that
$
\frac{\partial \left|1+\lambda_v\right|^{\eta_v}}{\partial \lambda_v}
$,
evaluated in $\epsilon=0$, yields $[0\ 0]$.
Hence in $\epsilon=0$ we have 
$$
\frac{\partial F_v}{\partial \lambda_v} = \langle 0\rangle.
$$
We can conclude that for $\delta=0, \lambda=0, \epsilon=0$ we have
\begin{multline*}
\left[\begin{array}{cccccc}
\left(\displaystyle\frac{\partial G_v}{\partial \delta_w}\right)_{v,w\in\nodes}&\left(\displaystyle\frac{\partial G_v}{\partial \lambda_w}\right)_{v,w\in\nodes}\\
 \\
\left(\displaystyle\frac{\partial F_v}{\partial \delta_w}\right)_{v,w\in\nodes}&\left(\displaystyle\frac{\partial F_v}{\partial \lambda_w}\right)_{v,w\in\nodes}
 \\
\end{array}
\right]=
\\
\left[\begin{array}{cccc|cccc}
0&0&\cdots&0&I&0&\cdots&0\\ 
0&\langle [\green]_{11} \rangle &\cdots&\langle [\green]_{1,n-1} \rangle
&0&I&\cdots&0\\
\vdots&\vdots&\ddots&\vdots&\vdots&\vdots&\ddots&\vdots\\
0&\langle [\green]_{n-1,1} \rangle&\cdots&\langle [\green]_{n-1,n-1} \rangle&0&0&\cdots&I\\
\hline
I&I&\cdots&I&0&0&\cdots&0\\
0&N&\cdots&0&0&0&\cdots&0\\
\vdots&\vdots&\ddots&\vdots&\vdots&\vdots&\ddots&\vdots\\
0&0&\cdots&N&0&0&\cdots&0\\
\end{array}
\right]
\end{multline*}
which is invertible.
By applying Taylor's theorem,
we can thus conclude from \eqref{eq:deltalambda} that
\begin{align*}
i_v &= \frac{\bar s_v}{U_N} + \frac{c_v(U_N)}{U_N^2}\\
u_v &= U_N + \frac{[\green \bar s]_v}{U_N} + \frac{d_v(U_N)}{U_N^2},
\end{align*}
where $c_v(U_N)$ and $d_v(U_N)$ are bounded functions of $U_N$.

\section{Proof of Theorem~\ref{thm:betaLessThanR}}
\label{app:convergence}

In order to prove~Theorem \ref{thm:betaLessThanR}, we need the following technical results.

\begin{lemma}
Consider the matrices $F_\cli$, $\cli=1,\ldots,\ell$, as defined in \eqref{eq:Fdefinition}. The following statements are equivalent:
\begin{enumerate}[i)]
\item the point $x=0$ is the only point in $\ker \1^T$ such that
$F_\cli x = x$ for all $\cli = 1,\ldots,\ell$;
\item $\Imag[\Omega_1 \ldots \Omega_\ell] =  \ker \1^T$;
\item the hypergraph $\mathcal H$ is connected.
\end{enumerate}
\label{lem:threeways}
\end{lemma}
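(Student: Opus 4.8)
The plan is to prove the chain i) $\Leftrightarrow$ ii) $\Leftrightarrow$ iii). The first step, common to both equivalences, is to rewrite the fixed-point condition $F_\cli x = x$ in a usable form. Since $F_\cli = I - (\Omega_\cli M \Omega_\cli)^\sharp M$, we have $F_\cli x = x$ exactly when $(\Omega_\cli M \Omega_\cli)^\sharp M x = 0$, and by property iii) of $(\Omega_\cli M \Omega_\cli)^\sharp$ (namely $\ker(\Omega_\cli M \Omega_\cli)^\sharp = \ker \Omega_\cli$) this holds iff $Mx \in \ker \Omega_\cli$. Because $\Omega_\cli$ is a symmetric idempotent, $\ker \Omega_\cli = (\Imag \Omega_\cli)^\perp = \mathcal S_\cli^\perp$ in the standard inner product, so $F_\cli x = x$ is equivalent to $Mx \perp \mathcal S_\cli$. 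Intersecting over all $\cli$, the set of common fixed points is $\{x : Mx \perp \sum_\cli \mathcal S_\cli\}$, which, writing $V := \Imag[\Omega_1 \ldots \Omega_\ell] = \sum_\cli \mathcal S_\cli$, is precisely the $M$-orthogonal complement $V^{\perp_M}$ of $V$.

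For i) $\Leftrightarrow$ ii) I would exploit that $M \succ 0$ (so $\langle \cdot,\cdot\rangle_M$ is a genuine inner product, as already used for the self-adjointness of the $F_\cli$) and that $V \subseteq \ker \1^T$, since each generator $\1_h - \1_k$ of $\mathcal S_\cli$ sums to zero. Restricting $\langle \cdot,\cdot\rangle_M$ to $\ker \1^T$ yields the orthogonal splitting $\ker \1^T = V \oplus (\ker \1^T \cap V^{\perp_M})$, whence $\dim(\ker \1^T \cap V^{\perp_M}) = (m-1) - \dim V$. By the first step, statement i) asserts exactly that $\ker \1^T \cap V^{\perp_M} = \{0\}$, i.e. $\dim V = m-1 = \dim \ker \1^T$; since $V \subseteq \ker \1^T$, this is equivalent to $V = \ker \1^T$, which is statement ii).

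For ii) $\Leftrightarrow$ iii) I would reduce to the classical rank--connectivity fact for edge-difference vectors. Recalling $\mathcal S_\cli = \Imag \Omega_\cli$ with $\Omega_\cli = \frac{1}{2|\compensators_\cli|}\sum_{h,k \in \compensators_\cli}(\1_h-\1_k)(\1_h-\1_k)^T$, the subspace $\mathcal S_\cli$ is spanned by the differences $\1_h - \1_k$ with $h,k \in \compensators_\cli$. Hence $V$ is spanned by all $\1_h - \1_k$ for which $h$ and $k$ lie in a common cluster, i.e. $V$ is the space of edge differences of the $2$-section graph $G$ of $\mathcal H$ (two compensators adjacent in $G$ iff some cluster contains both). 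By definition $\mathcal H$ is connected iff $G$ is connected, and for any graph on the vertex set $\compensators$ the span of its edge-difference vectors has dimension $m$ minus its number of connected components, so this span equals $\ker \1^T$ iff $G$ is connected. Chaining these gives $V = \ker \1^T \Leftrightarrow G$ connected $\Leftrightarrow \mathcal H$ connected.

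The main obstacle is the first step together with i) $\Leftrightarrow$ ii): one must correctly convert the algebraic fixed-point condition into an orthogonality statement and then carefully juggle the two distinct orthogonality structures in play, namely the standard one (entering through $\ker \Omega_\cli = \mathcal S_\cli^\perp$) and the $M$-weighted one (entering through the dimension count on $\ker \1^T$). Once these are kept straight, the combinatorial equivalence ii) $\Leftrightarrow$ iii) is routine, resting only on the standard relation between the rank of the edge-difference span and graph connectivity.
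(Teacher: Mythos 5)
Your proof is correct, and its substance coincides with the paper's: both arguments hinge on the identity $F_\cli x = x \Leftrightarrow Mx \in \ker\Omega_\cli$, on the positive definiteness of $M$ (on $\ker\1^T$), and on reducing ii)$\Leftrightarrow$iii) to the standard rank--connectivity fact for a graph built from cluster co-membership. Where you genuinely differ is in the organization of i)$\Leftrightarrow$ii): the paper proves the two implications separately --- ii)$\Rightarrow$i) by writing a common fixed point $x\in\ker\1^T$ as $\sum_\cli\Omega_\cli y_\cli$ and showing $x^TMx=0$, and i)$\Rightarrow$ii) by a duality argument passing through $\realnumbers^\nocompensators=\Imag\1+\sum_\cli\Imag(I-F_\cli^T)$ and a residual vector $w$ with $w^TMw=0$ --- whereas you identify the common fixed-point set once and for all as the $M$-orthogonal complement $V^{\perp_M}$ of $V=\Imag[\Omega_1\ldots\Omega_\ell]$ and settle both directions at once with the dimension count $\dim(\ker\1^T\cap V^{\perp_M})=(m-1)-\dim V$. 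This buys a shorter and more symmetric argument (the paper's ii)$\Rightarrow$i) is really the statement $V\cap V^{\perp_M}=\{0\}$ in disguise, and its i)$\Rightarrow$ii) is your dimension count unwound), at the price of having to keep the standard orthogonality (entering via $\ker\Omega_\cli=(\Imag\Omega_\cli)^\perp$) and the $M$-weighted one carefully separated, which you do. For ii)$\Leftrightarrow$iii) the paper works with the weighted Laplacian $L_W=\sum_\cli 2|\compensators_\cli|\Omega_\cli$ and the criterion $\ker L_W=\Imag\1$; your edge-difference-span formulation on the $2$-section of $\mathcal H$ is the same fact in equivalent form, so no gap there either.
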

\begin{proof}
Let us first prove that ii) implies i).
Assume that $\Imag[\Omega_1 \ldots \Omega_\ell] = \ker \1^T$ and that 
$x\in\ker \1^T$ is such that  $F_\cli x=x$ for all $\cli$.
Then we can find $y_\cli\in\mathbb R^n$ such that  
$$
x = \sum_\cli \Omega_\cli y_\cli.
$$
Moreover, as $F_\cli x = x$ for all $\cli$, then $(\Omega_\cli M \Omega_\cli)^\sharp M x = 0$ 
and so, by the properties mentioned above, we can argue that $M x \in \ker \Omega_\cli$.
We therefore have
$$
x^T M x = \sum_\cli y_\cli^T\Omega_\cli M x=0,
$$
and so, since $M$ is positive definite, it yields that $x=0$.

We prove now that i) implies ii).
The inclusion $\Imag[\Omega_1 \ldots \Omega_\ell] \subseteq \ker \1^T$ is always true and follows from the fact that $\Imag\Omega_\cli \subseteq \ker \1^T$.
We need to prove only the other inclusion.
Suppose that $x=0$ is the only point in $\ker \1^T$ such that  $F_\cli x=x$ for all $i$. 
This means that
$$
\ker \1^T\cap \ker (I-F_1)\cap\cdots\cap\ker (I-F_\ell)=\{0\},
$$
which implies that
$$\realnumbers^\nocompensators=
\Imag \1 + \Imag (I-F_1^T)+\cdots+\Imag (I-F_\ell^T)$$
Take now any $x \in \ker \1^T$. Then there exist a $\alpha\in\realnumbers$ and $y_i\in\realnumbers^\nocompensators$ such that
\begin{align*}
M x	&=\alpha\1+\sum_{\cli=1}^\ell(I-F_\cli)^Ty_\cli
		=\alpha\1+\sum_{\cli=1}^\ell M(\Omega_\cli M \Omega_\cli)^\sharp y_\cli
\end{align*}
Then $\alpha\1=Mw$ where
$$
w = x-\sum_{\cli=1}^\ell(\Omega_\cli M \Omega_\cli)^\sharp y_\cli=x-\sum_{\cli=1}^\ell \Omega_\cli z_\cli
$$
where we have used the fact that $\Imag (\Omega_\cli X \Omega_\cli)^\sharp=\Imag\Omega_\cli$. From the fact that $\Imag\Omega_\cli\subseteq\ker \1^T$, we can argue that $w\in\ker \1^T$ and so it follows that $0=w^T\alpha\1=w^TMw$. Finally, since $M$ is positive definite on the subspace $\ker \1^T$, we can conclude that $w=0$ and so that  
$x \in \Imag[\Omega_1 \ldots \Omega_\ell]$. 

We finally prove that ii) and iii) are equivalent.
We consider the matrix $W\in \mathbb R^{m\times m}$ with entries $[W]_{hk}$ equal to the number of the sets $\mathcal C_\cli$ which contain both $h$ and $k$. Let $\mathcal G_W$ be the weighted graph associated with $W$.
It is easy to see that the hypergraph having  $\mathcal C_\cli$ as edges
is connected if and only if $\mathcal G_{W}$ is a connected graph.

Let us define $\chi_{\mathcal C_\cli}:\compensators \rightarrow \{0,1\}$ 
as the characteristic function of the set $\mathcal C_\cli$,
namely a function of the nodes that is $1$ when the node belongs to $\mathcal C_\cli$ and is zero otherwise. 
Then the Laplacian  $L_{W}$ of $\mathcal G_{W}$ can be expressed as follows
\begin{align*}
L_{W} &= \sum_{h,k \in \compensators}(\1_h-\1_k)(\1_h-\1_k)^T [W]_{hk}\\
&= \sum_{h,k \in \compensators}(\1_h-\1_k)(\1_h-\1_k)^T \sum_{i=1}^\ell \chi_{\mathcal C_\cli}(h)\chi_{\mathcal C_\cli}(k) \\
&= \sum_{\cli=1}^\ell\sum_{h,k \in \mathcal C_\cli}(\1_h-\1_k)(\1_h-\1_k)^T 
= \sum_{\cli=1}^\ell 2|\mathcal C_\cli| \Omega_\cli \\
&= [\Omega_1 \ldots \Omega_\ell] \diag\{2|\mathcal C_1|I,\ldots,2|\mathcal C_\ell| I\}
\left[\begin{array}{c}\Omega_1\\ \vdots\\ \Omega_\ell\end{array}\right] .
\end{align*}
Notice that the graph connectivity is equivalent to the fact that $\ker L_W=\Imag \1$ and so, by the previous equality, it is equivalent to 
$$
\ker \left[\begin{array}{c}\Omega_1\\ \vdots\\ \Omega_\ell\end{array}\right] =\Imag \1
$$
which is equivalent to ii).
\end{proof}

We also need the following technical lemmas.

\begin{lemma}
Let $P, Q \in \mathbb R^{m\times m}$ and $P \ge Q$. Then $\mathcal L^k(P) \ge \mathcal L^k(Q)$ for all $k \in \mathbb Z_{\ge 0}$.
\label{lem:monotoneL}
\end{lemma}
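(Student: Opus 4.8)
The plan is to prove the statement by induction on $k$, exploiting the fact that $\mathcal L$ is a linear operator that preserves the cone of positive semidefinite matrices. Throughout, $P \ge Q$ is understood in the Loewner sense, i.e.\ that $P - Q$ is symmetric positive semidefinite.

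First I would record the explicit form of $\mathcal L$. Unfolding the expectation over the i.i.d.\ cluster index under Assumption~\ref{ass:randomSubproblemExecution}, we have
$$
\mathcal L(\Delta) = \sum_{\cli=1}^\ell \rho_\cli\, F_\cli^T \Delta F_\cli .
$$
The key observation is that $\mathcal L$ is a \emph{positive} operator: if $\Delta \ge 0$, then for every $x \in \realnumbers^\nocompensators$ and every $\cli$ we have $x^T F_\cli^T \Delta F_\cli x = (F_\cli x)^T \Delta (F_\cli x) \ge 0$, so each congruence term $F_\cli^T \Delta F_\cli$ is positive semidefinite; since the weights $\rho_\cli > 0$, their combination $\mathcal L(\Delta)$ is positive semidefinite as well (and symmetric, as $\mathcal L$ maps symmetric matrices to symmetric matrices). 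By linearity of $\mathcal L$ this immediately yields monotonicity of a single application: if $P \ge Q$, then $\mathcal L(P) - \mathcal L(Q) = \mathcal L(P-Q) \ge 0$, i.e.\ $\mathcal L(P) \ge \mathcal L(Q)$.

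The claim for general $k$ then follows by a routine induction. The base case $k = 0$ is trivial, since $\mathcal L^0$ is the identity and $P \ge Q$ by hypothesis. For the inductive step, assuming $\mathcal L^k(P) \ge \mathcal L^k(Q)$, I would write $\mathcal L^{k+1}(P) - \mathcal L^{k+1}(Q) = \mathcal L\bigl(\mathcal L^k(P) - \mathcal L^k(Q)\bigr)$, which is positive semidefinite by the positivity of $\mathcal L$ applied to the positive semidefinite argument $\mathcal L^k(P) - \mathcal L^k(Q)$. Hence $\mathcal L^{k+1}(P) \ge \mathcal L^{k+1}(Q)$, completing the induction.

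I expect no real obstacle here: the only substantive ingredient is the positivity of $\mathcal L$, which reduces to the elementary facts that a congruence transformation $\Delta \mapsto F^T \Delta F$ preserves positive semidefiniteness and that a nonnegative combination of positive semidefinite matrices is positive semidefinite. Everything else is purely formal and rests only on the linearity of $\mathcal L$.
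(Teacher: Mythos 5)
Your proposal is correct and follows essentially the same route as the paper: the paper likewise shows $x^T[\mathcal L(P)-\mathcal L(Q)]x = \expect{x^T F_\cli^T (P-Q) F_\cli x} \ge 0$ (i.e.\ positivity of the congruence terms under the expectation) and then iterates $k$ times. Your version merely makes the induction and the linearity/positivity decomposition explicit.
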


\begin{proof}
From the definition of $\mathcal L$, we have
\begin{equation*}
\begin{split}
x^T\left[ \mathcal L(P) - \mathcal L(Q) \right] x 
&= x^T \left[ \expect{F_\cli^T P F_\cli} - \expect{F_\cli^T Q F_\cli} \right] x \\
&=  \expect{x^T F_\cli^T (P - Q) F_\cli x}  
\ge 0,
\end{split}
\end{equation*}
and therefore $P \ge Q$ implies $\mathcal L(P) \ge \mathcal L(Q)$. 
By iterating these steps $k$ times we then obtain $\mathcal L^k(P) \ge \mathcal L^k(Q)$.
\end{proof}

\begin{lemma}
\label{lem:removingOmegas}
For all $\Delta$ we have that
$
\Omega \mathcal L^k (\Omega \Delta \Omega) \Omega = \Omega \mathcal L^k (\Delta) \Omega.
$
\end{lemma}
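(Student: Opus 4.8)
The plan is to exploit the linearity of the operator $\mathcal L^k$ and to route everything through a well‑chosen invariant subspace, rather than trying to push $\Omega$ through $F_\cli$ directly. Since $\mathcal L(\Delta)=\sum_\cli \rho_\cli F_\cli^T \Delta F_\cli$ is linear, so is $\mathcal L^k$, and therefore it suffices to prove the single identity
\[
\Omega\,\mathcal L^k\!\left(\Delta-\Omega\Delta\Omega\right)\Omega = 0 ,
\]
from which the claim follows by subtracting $\Omega\mathcal L^k(\Omega\Delta\Omega)\Omega$ from $\Omega\mathcal L^k(\Delta)\Omega$. Writing $\Omega = I-\tfrac1m\1\1^T$ and expanding, the first step is to observe that the correction has the explicit form
\[
\Delta-\Omega\Delta\Omega=\tfrac1m\1(\1^T\Delta)+\tfrac1m(\Delta\1)\1^T-\tfrac{\1^T\Delta\1}{m^2}\,\1\1^T ,
\]
so it always lies in the subspace $\mathcal V:=\{\,\1 v^T+w\1^T \,:\, v,w\in\realnumbers^{m}\,\}$ of matrices carrying a factor $\1$ on at least one side.

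The key step is to show that $\mathcal V$ is invariant under $\mathcal L$, and for this I would first establish the identity $F_\cli^T\1=\1$ (equivalently $\1^T F_\cli=\1^T$) for every $\cli$. This rests on $\Omega_\cli\1=0$, which is immediate from the definition of $\Omega_\cli$; combined with property iii) of the matrices $(\Omega_\cli M\Omega_\cli)^\sharp$, namely $\ker(\Omega_\cli M\Omega_\cli)^\sharp=\ker\Omega_\cli$, it gives $(\Omega_\cli M\Omega_\cli)^\sharp\1=0$. As $\Omega_\cli M\Omega_\cli$ is symmetric its pseudoinverse is symmetric, so $F_\cli^T=I-M(\Omega_\cli M\Omega_\cli)^\sharp$ and hence $F_\cli^T\1=\1-M(\Omega_\cli M\Omega_\cli)^\sharp\1=\1$. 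Using this, a generator $\1 v^T$ maps to $\mathcal L(\1 v^T)=\sum_\cli\rho_\cli(F_\cli^T\1)\,v^T F_\cli=\1\,v^T\Fave\in\mathcal V$, and symmetrically $\mathcal L(w\1^T)=\Fave^T w\,\1^T\in\mathcal V$. By linearity $\mathcal L(\mathcal V)\subseteq\mathcal V$, and therefore $\mathcal L^k(\mathcal V)\subseteq\mathcal V$ for all $k$.

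Finally, since $\Omega\1=0$ and $\1^T\Omega=0$, every $X=\1 v^T+w\1^T\in\mathcal V$ obeys $\Omega X\Omega=(\Omega\1)v^T\Omega+\Omega w(\1^T\Omega)=0$. Combining the three ingredients — that $\Delta-\Omega\Delta\Omega\in\mathcal V$, that $\mathcal L^k(\mathcal V)\subseteq\mathcal V$, and that $\Omega(\cdot)\Omega$ annihilates $\mathcal V$ — yields the displayed identity and closes the argument. I expect the only genuine obstacle to be resisting the natural urge to simplify $\Omega\mathcal L(\Delta)\Omega$ termwise: $\Omega$ and $F_\cli$ do \emph{not} commute (only $\Omega F_\cli=F_\cli-\tfrac1m\1\1^T$ is clean, whereas $F_\cli\Omega$ is not, because $M\Omega\neq M$), so a direct sandwiching does not collapse. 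Isolating the rank‑one $\1$‑corrections in the invariant subspace $\mathcal V$ and reducing the entire matter to the single identity $F_\cli^T\1=\1$ sidesteps this difficulty.
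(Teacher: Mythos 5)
Your proof is correct, and it takes a genuinely different route from the paper's. The paper proceeds by induction on $k$: the base case is $\Omega^2=\Omega$, and the induction step inserts the sandwich $\Omega(\cdot)\Omega$ around $\mathcal L^{k}(\Delta)$ inside one application of $\mathcal L$; that step rests on the one-step identities $\Omega F_r\Omega=F_r\Omega$ and $\Omega F_r^T\Omega=\Omega F_r^T$, which hold because $\Imag(\Omega_r M\Omega_r)^\sharp=\Imag\Omega_r\subseteq\ker\1^T$ and $\Omega$ acts as the identity on $\ker\1^T$ (the same identities are invoked in the proof of Theorem~\ref{thm:betaLessThanR}). You instead exploit linearity of $\mathcal L^k$ and show that the discrepancy $\Delta-\Omega\Delta\Omega$ lives in the subspace $\mathcal V$ of matrices carrying a factor $\1$ on one side, that $\mathcal V$ is $\mathcal L$-invariant because $F_r^T\1=\1$, and that $\Omega(\cdot)\Omega$ annihilates $\mathcal V$. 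Both arguments are complete; yours makes the structural reason for the lemma explicit (the correction term is confined to the fixed direction $\1$, the same fact that underlies Lemma~\ref{lem:eigenvaluesOfFBar}), at the cost of a little more bookkeeping, while the paper's induction is shorter once the one-step sandwich identity is granted. One correction to your closing remark: the direct sandwiching does collapse --- $\Omega F_r\Omega=F_r\Omega$ holds exactly, by the range argument above --- so the obstacle you anticipate is not actually there; your route simply makes it unnecessary.
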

\begin{proof}
Proof is by induction. The statement is true for $k=0$, as $\Omega^2 = \Omega$. Suppose it is true up to $k$.
We then have
\begin{align*}
\Omega \mathcal L^{k+1} (\Delta) \Omega 
&= \Omega \mathcal L ( \mathcal L^{k}(\Delta)) \Omega 
= \Omega \mathcal L ( \Omega \mathcal L^{k}(\Delta) \Omega ) \Omega \\
&= \Omega \mathcal L ( \Omega \mathcal L^{k}( \Omega \Delta \Omega) \Omega ) \Omega 
= \Omega \mathcal L^{k+1}( \Omega \Delta \Omega) \Omega.
\end{align*}
\end{proof}

\begin{lemma}
All the eigenvalues of $\Fave$ are real and have absolute value not larger than $1$.
If $\Imag\begin{bmatrix}\Omega_1 \cdots \Omega_\ell\end{bmatrix} = \ker \1^T$
and if Assumption \ref{ass:randomSubproblemExecution} holds,
then the only eigenvalue of $\Fave$ on the unitary circle is $\lambda = 1$,
with multiplicity $1$ and with associated left eigenvector $\1$ and right eigenvector $M^{-1}\1$.
\label{lem:eigenvaluesOfFBar}
\end{lemma}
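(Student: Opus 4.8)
The plan is to carry out the whole argument inside the inner product $\langle x,y\rangle_M:=x^TMy$, which is genuine because $M$ is positive definite, and to lean on the two properties established right after \eqref{eq:Fdefinition}: each $F_\cli$ is self-adjoint for $\langle\cdot,\cdot\rangle_M$, and each $F_\cli$ is an orthogonal projection for the same inner product. Since $\Fave=\sum_\cli\rho_\cli F_\cli$ is a convex combination of $M$-self-adjoint operators it is itself $M$-self-adjoint, hence diagonalizable with real eigenvalues and an $M$-orthonormal eigenbasis. This already settles the reality assertion.

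For the absolute-value bound I would use that, for an $M$-orthogonal projection, $\langle F_\cli x,x\rangle_M=\|F_\cli x\|_M^2$ (combining $F_\cli^2=F_\cli$ with self-adjointness), while the orthogonality $\langle F_\cli x,F_\cli x-x\rangle_M=0$ yields the Pythagorean identity $\|x\|_M^2=\|F_\cli x\|_M^2+\|x-F_\cli x\|_M^2$ and therefore $0\le\langle F_\cli x,x\rangle_M\le\|x\|_M^2$. Averaging against the weights $\rho_\cli\ge 0$, which sum to $1$, gives $0\le\langle\Fave x,x\rangle_M\le\|x\|_M^2$ for every $x$. Evaluating this Rayleigh quotient on an eigenvector shows that every eigenvalue of $\Fave$ lies in $[0,1]$; in particular $|\lambda|\le 1$, and $-1$ cannot occur, so the only eigenvalue the unit circle can carry is $\lambda=1$.

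To produce $\lambda=1$ I would first note that $\Omega_\cli\1=0$ together with the identity $(\Omega_\cli M\Omega_\cli)^\sharp=(\Omega_\cli M\Omega_\cli)^\sharp\Omega_\cli$ (property iii) gives $(\Omega_\cli M\Omega_\cli)^\sharp\1=0$. Hence $F_\cli M^{-1}\1=M^{-1}\1-(\Omega_\cli M\Omega_\cli)^\sharp\1=M^{-1}\1$; multiplying this on the left by $M$ and using $MF_\cli=F_\cli^TM$ gives $F_\cli^T\1=\1$, i.e. $\1^TF_\cli=\1^T$. Averaging over $\cli$ yields $\Fave M^{-1}\1=M^{-1}\1$ and $\1^T\Fave=\1^T$, so $\lambda=1$ is an eigenvalue with right eigenvector $M^{-1}\1$ and left eigenvector $\1$.

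The delicate point — and the step I expect to be the main obstacle — is the simplicity of $\lambda=1$, which is where the connectivity hypothesis and the strict positivity $\rho_\cli>0$ of Assumption~\ref{ass:randomSubproblemExecution} enter. The key reduction is that a fixed vector of the average is fixed by every term: if $\Fave x=x$ then $\sum_\cli\rho_\cli\|F_\cli x\|_M^2=\|x\|_M^2$, and since each summand is bounded by $\|x\|_M^2$ with all $\rho_\cli>0$, equality forces $\|F_\cli x\|_M=\|x\|_M$, hence $F_\cli x=x$ for every $\cli$ by the Pythagorean identity above. I would then split $\realnumbers^m=\Imag(M^{-1}\1)\oplus\ker\1^T$ $M$-orthogonally, which is legitimate because $\langle M^{-1}\1,z\rangle_M=\1^Tz$ identifies $\ker\1^T$ as the $M$-orthogonal complement of $M^{-1}\1$. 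Writing $x=\alpha M^{-1}\1+y$ with $y\in\ker\1^T$ and using $F_\cli M^{-1}\1=M^{-1}\1$, the component $y$ is a common fixed point of all $F_\cli$ lying inside $\ker\1^T$; under the hypothesis $\Imag[\Omega_1\cdots\Omega_\ell]=\ker\1^T$, the equivalence i)$\Leftrightarrow$ii) of Lemma~\ref{lem:threeways} forces $y=0$. Thus the $1$-eigenspace equals $\Imag(M^{-1}\1)$, which is one-dimensional, completing the multiplicity claim.
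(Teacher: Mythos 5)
Your proof is correct and follows essentially the same route as the paper's: $M$-self-adjointness of the projections $F_r$, the equality-in-the-average argument (using $\rho_r>0$) to force $F_r x=x$ for every $r$ whenever $|\lambda|=1$, and the equivalence of i) and ii) in Lemma~\ref{lem:threeways} to pin the common fixed space inside $\ker\1^T$ down to the span of $M^{-1}\1$. The only difference is cosmetic: you bound the Rayleigh quotient $\langle \Fave x,x\rangle_M$ (which in fact yields the slightly sharper conclusion that the spectrum lies in $[0,1]$, and makes the verification that $\lambda=1$ is actually attained explicit), whereas the paper bounds the induced operator norm $\|\Fave\|_M\le 1$.
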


\begin{proof}
Recall that the matrices $F_\cli$ are projection operators, i.e. $F_\cli^2 = F_\cli$ and so they have eigenvalues $0$ or $1$. Recall moreover that  $F_\cli$ are self-adjoint matrices with respect to the inner product $\langle \cdot, \cdot \rangle_M$,  defined as $\langle x, y \rangle_M = x^T M y$, 
This implies that  $||F_\cli||_M\le 1$ where $||\cdot||_M$ is the induced matrix norm with respect to the vector norm $||x||_M:=\langle x, x \rangle_M^{1/2}$. This implies that
$$
\|\Fave\|_M=\|\expect{F_\cli}\|_M \le \expect{\|F_\cli\|_M} \le 1
$$
And so, since also $\Fave$ is self-adjoint, its eigenvalues are real and are smaller than or equal to $1$ in absolute value.

Assume now that $\Fave x = \lambda x$, with $|\lambda|=1$. Then we have
$$
\|x\|_M = \|\Fave\|_M = \|\expect{F_\cli} x\|_M \le \expect{\|F_\cli x\|_M} \le \|x\|_M.
$$
If Assumption \ref{ass:randomSubproblemExecution} holds 
(and thus the probabilities $\rho_\cli$'s are all strictly greater than $0$),
then the last inequality implies that $\|F_\cli x\|_M = \|x\|_M$ for all $\cli$'s.
As $F_\cli$ are projection matrices, it means that $F_\cli x = x$ and so $Mx \in \ker \Omega_\cli^T, \forall \cli$. 
Using the fact that $\Imag\begin{bmatrix}\Omega_1 \cdots \Omega_\ell\end{bmatrix} = \ker \1^T$, we necessarily have $x = M^{-1}\1$. By inspection we can verify that the left eigenvector corresponding to the same eigenvalue is $\1$.
\end{proof}

We can now give the proof of Theorem \ref{thm:betaLessThanR}.

\begin{proof}[Proof of Theorem \ref{thm:betaLessThanR}]
Let us first prove that $\Omega \mathcal L (\Omega M \Omega) \Omega \le \beta \Omega M \Omega$.
Indeed, we have
\begin{equation*}
\begin{split}
x^T \Omega \mathcal L(\Omega M \Omega) \Omega x &=
\expect{x^T \Omega F_\cli^T \Omega M \Omega F_\cli \Omega x} \\
&= \expect{x^T \Omega F_\cli^T M F_\cli \Omega x} \\
&= x^T \Omega M^{1/2} \expect{M^{1/2} F_\cli M^{-1/2}} M^{1/2} \Omega x,
\end{split}
\end{equation*}
where we use the fact that $\Omega F_\cli \Omega = F_\cli \Omega$ and $F_\cli^T M F_\cli= M F_\cli$.
Notice moreover that 
$\expect{M^{1/2} F_\cli M^{-1/2}} = M^{1/2} \Fave M^{-1/2}$ is symmetric and, 
by Lemma~\ref{lem:threeways} ane Lemma~\ref{lem:eigenvaluesOfFBar}, it has only one eigenvalue on the unit circle (precisely in $1$), with eigenvector $M^{-1/2}\1$.
As $M^{1/2} \Omega x \perp M^{-1/2}\1$ for all $x$, we have
\begin{equation*}
x^T \Omega \mathcal L(\Omega M \Omega) \Omega x \le \beta  x^T \Omega M \Omega x,
\end{equation*}
with $\beta = \max \{ |\lambda| \;|\; \lambda \in \lambda(\Fave), \lambda \ne 1\}$.
From this result, using Lemmas \ref{lem:monotoneL} and \ref{lem:removingOmegas}, we can conclude
\begin{equation*}
\begin{split}
\Omega \mathcal L^t (\Omega M \Omega) \Omega 
&= \Omega \mathcal L^{t-1} \left( \mathcal L (\Omega M \Omega ) \right) \Omega \\
&= \Omega \mathcal L^{t-1} \left( \Omega \mathcal L (\Omega M \Omega ) \Omega \right) \Omega \\
& \le \Omega \mathcal L^{t-1} \left( \beta \Omega M \Omega \right) \Omega \\
&= \beta \Omega \mathcal L^{t-1} \left( \Omega M \Omega \right) \Omega 
\le \cdots \le \beta^t \Omega M \Omega,
\end{split}
\end{equation*}
and therefore $R \le \beta$.
\end{proof}

\bibliographystyle{IEEEtran}

\end{document}